\newcommand{\sg}{\sigma}
\def\inv{\operatorname{inv}}
\newcommand{\qint}[1] { [#1]_{q} }
\newcommand{\qtint}[1] { [#1]_{q,t} }
\newcommand{\qstir}[3] {S_{#1, #2}(#3)}
\def\set{\operatorname{set}}
\newcommand{\osp}[2] {\mathcal{OP}_{#1, #2}}
\def\bl{\operatorname{bl}}
\def\S{\mathfrak{S}}
\def\multiset#1#2{\ensuremath{\left(\kern-.3em\left(\genfrac{}{}{0pt}{}{#1}{#2}\right)\kern-.3em\right)}}
\def\syt{\operatorname{SYT}}
\def\area{\operatorname{area}}
\newcommand{\pf}{\mathcal{PF}}
\newcommand\tes[3]{\operatorname{Tes}_{#1}(#2,#3)}
\newcommand\tesset[1]{\mathcal{T}_{#1}}
\newcommand\ptesset[1]{\mathcal{PT}_{#1}}
\def\wt {\operatorname{wt}}
\def\pos{\operatorname{entries}^{+}}
\def\posrows{\operatorname{rows}^{+}}
\def\neg{\operatorname{entries}^{-}}
\def\negrows{\operatorname{rows}^{-}}
\def\hilb{\operatorname{Hilb}}
\def\vhilb{\widetilde{\operatorname{Hilb}}}
\def\sort{\operatorname{sort}}
\def\hooks{\alpha}
\def\hooksvec{\operatorname{hooks}}
\def\nonzero{\operatorname{nonzero}}
\def\entries{\operatorname{entries}}
\def\rows{\operatorname{rows}}
\def\el{\ell}
\def\arm{a}
\def\leg{\el}
\newcommand{\tail}{\operatorname{tail}}
\def\target{\operatorname{target}}
\def\car{\operatorname{car}}
\def\spot{\operatorname{spot}}
\def\considerate{\operatorname{cons}}
\def\consideratepf{\mathcal{CPF}}
\newtheorem{lemma}{Lemma}[section]
\newtheorem{prop}{Proposition}[section]
\newtheorem{cor}{Corollary}[section]
\newtheorem{thm}{Theorem}[section]
\author{Andrew Timothy Wilson }
\title{A weighted sum over generalized Tesler matrices}
\address{Department of Mathematics, University of Pennsylvania, 209 South 33rd Street, Philadelphia, PA 19104-6395}
\thanks{The author was supported by an NDSEG Fellowship and an NSF Postdoctoral Fellowship. Much of this work was done while the author was at UC San Diego. An extended abstract of this material appeared in the proceedings of FPSAC 2015. Finally, the author would like to thank Jim Haglund, Alejandro Morales, Jeff Remmel, and Brendon Rhoades for useful discussions.}
\email{\texttt{andwils@math.upenn.edu}}
\keywords{Tesler matrices, Macdonald polynomials, Shuffle Conjecture, ordered set partitions, parking functions}
\begin{document}
\maketitle

\begin{abstract}
We generalize previous definitions of Tesler matrices to allow negative matrix entries and negative hook sums. Our main result is an algebraic interpretation of a certain weighted sum over these matrices, which we call the Tesler function. Our interpretation uses a new class of symmetric function specializations which are defined by their values on Macdonald polynomials. As a result of this interpretation, we obtain a Tesler function expression for the Hall inner product $\langle \Delta_f e_n, p_{1^{n}}\rangle$, where $\Delta_f$ is the delta operator introduced by Bergeron, Garsia, Haiman, and Tesler. We also provide simple formulas for various special cases of Tesler functions which involve $q,t$-binomial coefficients, ordered set partitions, and parking functions. These formulas prove two cases of the recent Delta Conjecture posed by Haglund, Remmel, and the author.
\end{abstract}

\tableofcontents

\section{Introduction}
\label{sec:intro}
We say that a matrix $U \in M_{n}(\mathbb{Z})$ is a \emph{Tesler matrix} if 
\begin{enumerate}
\item $U$ is upper triangular,
\item \label{no-zero-rows} $U$ has no zero rows, and
\item \label{same-sign} every row of $U$ is entirely non-negative or entirely non-positive.
\end{enumerate}
Given a Tesler matrix $U$, $\hooksvec(U)$ is the vector whose $i$th entry is defined by
\begin{align*}
\hooksvec_i(U) &= (U_{i,i} + \ldots + U_{i,n}) - (U_{1,i} + \ldots + U_{i-1,i}) .
\end{align*}
We will sometimes call this the $i$th \emph{hook sum} of $U$. For example, the matrix
\begin{align*}
U = 
\left[ \begin{array}{rrrr}
0 & 1 & 0 & 2 \\
0 & -1 & -1 & 0 \\
0 & 0 & 1 & 0 \\
0 & 0 & 0 & 1
\end{array} \right]
\end{align*}
is a Tesler matrix with $\hooksvec(U) = (3,-3,2,-1)$. For $\hooks \in \mathbb{Z}^n$, we denote the set of all Tesler matrices with $\hooksvec(U) = \hooks$ by $\tesset{\hooks}$.

The cases $\hooks  = (1, 1, \ldots, 1)$ and $\hooks = (1, m, \ldots, m)$ for any positive integer $m$ are studied in \cite{tesler-hilbert}, where they are used to give an expression for the Hilbert series of the (generalized) module of diagonal harmonics. More values of $\hooks$ have appeared in the study of Hall-Littlewood polynomials \cite{tesler-lots}, Macdonald polynomial operators \cite{tesler-constant}, and flow polytopes \cite{tesler-polytope}. 

We will sometimes refer to a matrix that satisfies condition (2) above as \emph{essential} and a matrix that satisfies condition (3) as \emph{signed}. Since previous work on Tesler matrices primarily addresses matrices with positive hook sums, and conditions (2) and (3) are trivial in that setting, our definition generalizes previous definitions of Tesler matrices. 

Previous work on Tesler matrices for particular $\hooks$ has shown that it is unlikely that the number of Tesler matrices $|\tesset{\hooks}|$ has a simple formula. Perhaps the best way of counting Tesler matrices so far is to use the fact that they are integer points in the Tesler polytope \cite{tesler-polytope}; this gives us a concrete reason to try to extend the Tesler polytope to our setting.

Instead of attempting to count Tesler matrices, we will consider the \emph{weight} of an $n \times n$ Tesler matrix $U$
\begin{align*}
\wt(U;q,t) &= (-1)^{\pos(U) - \posrows(U)} M^{\nonzero(U) - n} \prod_{U_{i,j} \neq 0} \qtint{U_{i,j}} 
\end{align*}
where $M = (1-q)(1-t)$, $\pos(U)$ is the number of positive entries in $U$, $\posrows(U)$ is the number of rows of $U$ whose nonzero entries are all positive, $\nonzero(U)$ is the number of nonzero entries of $U$, and $\qtint{k} = \frac{q^k - t^k}{q-t}$, the usual $q,t$-analogue of an integer $k$. Since $U$ is essential, the exponent of $M$ is nonnegative and $\wt(U;q,t) \in \mathbb{Z}[q,t,1/q,1/t]$. When $U$ has no negative entries (which must be the case if each $\hooks_i$ is positive), this weight function is equal to the weight function defined in \cite{tesler-hilbert}. It is also worth noticing that the weight of a Tesler matrix is independent of $\hooks$. We define the \emph{Tesler function with hook sums $\hooks$} to be
\begin{align*}
\tes{\hooks}{q}{t} &= \sum_{U \in \tesset{\hooks}} \wt(U;q,t) \in \mathbb{Z}[q,t,1/q,1/t].
\end{align*}
In \cite{tesler-hilbert}, Haglund showed that $\tes{1^n}{q}{t}$ is equal to the Hilbert series of the module of diagonal harmonics, which can also be written in terms of Macdonald polynomial operators as $\langle \nabla e_n, p_{1^{n}}\rangle$ or $\langle \Delta_{e_n} e_n, p_{1^{n}}\rangle$. \cite{tesler-constant} contains an algebraic interpretation for $\tes{\hooks}{q}{t}$ for any $\hooks$ with only positive entries. We summarize these results, along with the necessary notation, in Section \ref{sec:background}.

In Section \ref{sec:hilbert}, we develop an algebraic interpretation for $\tes{\hooks}{q}{t}$ for any $\hooks \in \mathbb{Z}^n$ in terms of new symmetric function specializations which we call \emph{virtual Hilbert series}. Our interpretation is equivalent to the interpretation in \cite{tesler-constant} for positive hook sums; in this sense, the definition of Tesler matrices that we have used here is the natural extension of previous definitions. These specializations generalize the map that sends a symmetric function $f$ that is homogeneous of degree $n$ to its inner product with $p_{1^n}$. In the case that $f$ is the Frobenius image of an $\S_n$-module, this inner product extracts the module's Hilbert series. With this in mind, for \emph{any} symmetric function $f$ that is homogenous of degree $n$, we will often use the notation
\begin{align*}
\hilb f &= \langle f , p_{1^{n}}\rangle.
\end{align*}

In Section \ref{sec:harmonics}, we show that certain sums of virtual Hilbert series appear in the study of diagonal harmonics, especially in connection with the Macdonald polynomial operators $\Delta_f$ and $\Delta^{\prime}_f$. We use the algebraic interpretation of Tesler functions from Section \ref{sec:hilbert} to produce a number of new results about these operators. In particular, we show that $\hilb \Delta^{\prime}_f e_n$ and $\hilb \Delta_f e_n$ are always polynomials in $q$ and $t$ for any symmetric function $f$ and we derive a simple expression for $\hilb \Delta_{e_1} e_n$. 

In the remainder of the paper, we prove results about special cases of Tesler functions. Section \ref{sec:t=0} addresses the case where $\hooks \in \{0,1\}^n$ and $t=0$. In particular, we extend an involution of Levande \cite{levande} from the $\hooks = 1^n$ to case to show that
\begin{align*}
\tes{\hooks}{q}{0} &= \prod_{i=1}^{n} \qint{\hooks_1+\hooks_2+\ldots+\hooks_i}
\end{align*}
via a connection to ordered set partitions. This completes a case of the Delta Conjecture, described in \cite{delta-conjecture}. In Section \ref{sec:t=1} we address the $t=1$ case, providing a formula for $\tes{\hooks}{q}{1}$ for any $\hooks \in \mathbb{Z}^n$. Furthermore, we show that
\begin{align*}
\tes{\hooks}{1}{1} &= \hooks_1(\hooks_1 + n\hooks_2)(\hooks_1 + \hooks_2 + (n-1)\hooks_3) \ldots (\hooks_1 + \hooks_2 + \ldots + \hooks_{n-1} + 2\hooks_n) 
\end{align*}
for any $\hooks \in \mathbb{Z}^n$, extending a formula from \cite{tesler-lots}. These results exploit new connections between Tesler matrices, ordered set partitions, and parking functions. Finally, we discuss some potential future directions in Section \ref{sec:future}.

\section{Background}
\label{sec:background}

First, we fix some notation. We use $\Lambda$ to denote the algebra of symmetric functions over the base field $\mathbb{Q}(q,t)$. If we wish to refer to the elements of $\Lambda$ which are homogeneous of degree $n$, we will write $\Lambda^{(n)}$. Similarly, we will use $\underline{\Lambda}$ and $\underline{\Lambda}^{(n)}$ to refer to the algebra of symmetric Laurent polynomials. Occasionally, we will use ${\mathbb{Z}[q,t]}$ as a subscript to refer to the subalgebra of symmetric functions or symmetric Laurent polynomials consisting of the functions with coefficients in $\mathbb{Z}[q,t]$. 

There are several important classical bases for the space of symmetric functions (viewed as a vector space): the monomial symmetric functions $\{m_{\lambda}\}$, the elementary symmetric functions $\{e_{\lambda}\}$, the homogeneous symmetric functions $\{h_{\lambda}\}$, the power symmetric functions $\{p_{\lambda}\}$, and the Schur functions $\{s_{\lambda}\}$. The (modified) Macdonald polynomials $\{\widetilde{H}_{\lambda}\}$ are also a basis for this space, and generalize many of the important properties of the classical bases. We refer the reader to \cite{ec2, macdonald} for more material on symmetric functions and Macdonald polynomials. The only basis we will use for the symmetric Laurent polynomials is the monomial basis $\{m_{\rho}\}$, defined as the sum of all monomials whose exponents, when arranged in weakly decreasing order, equal the finite, weakly decreasing vector of nonzero integers $\rho$. We will refer to a finite vector of weakly decreasing nonzero integers as a \emph{Laurent partition}. Since each partition is also a Laurent partition, our choice to use $m$'s for both types of monomial bases is justified.

When studying Macdonald polynomials, it is quite useful to have the following notation. Given a partition $\mu \vdash n$ and a cell $c$ in the Young diagram of $\mu$ (drawn in French notation) we set $\arm(c), \arm^{\prime}(c)$, $\el(c)$, and $\el^{\prime}(c)$ to be the number of cells in $\mu$ that are strictly to the right of, to the left of, above, and below $c$ in $\mu$, respectively. In Figure \ref{fig:arm-leg}, we compute these values for a particular example.

\begin{figure}
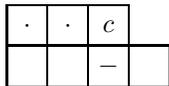

\begin{displaymath}
\begin{ytableau}
\cdot 	& \cdot	& c  \\
\phantom{\cdot}	&  \phantom{\cdot} 	&  -	&  \phantom{\cdot}
\end{ytableau}
\end{displaymath}
\caption{This is the Young diagram (in French notation) of the partition $(4,3)$. The cell $c$ has $\arm(c) = 0$, $a^{\prime}(c) = 2$ (represented by dots), $\el(c) = 0$, and $\el^{\prime}(c) = 1$ (represented by dashes).}
\label{fig:arm-leg}
\end{figure}

We set 
\begin{align*}
T_{\mu} &= \prod_{c \in \mu} q^{a^{\prime}(c)} t^{\el^{\prime}(c)} \ \ \ \ \ \ \ \ \ \ \ \ B_{\mu} = \sum_{c \in \mu} q^{a^{\prime}(c)} t^{\el^{\prime}(c)} 
\end{align*} 
We will use this notation to define a number of operators on $\Lambda^{(n)}$. Each of the operators is defined by its action on the Macdonald polynomial basis. First, we define
\begin{align*}
\nabla \widetilde{H}_{\mu} = T_{\mu} \widetilde{H}_{\mu} .
\end{align*}
The $\nabla$ operator has become quite famous due to its connection with the module of diagonal harmonics. Specifically, in \cite{delta} Haiman proved that the Frobenius image of the character of the module of diagonal harmonics of order $n$ is equal to $\nabla e_n$. For more on this module, see \cite{haglund-book}. The main result in \cite{tesler-hilbert} is that
\begin{align*}
\hilb \nabla e_n &= \tes{1^n}{q}{t}.
\end{align*}

Given any symmetric Laurent polynomial $f$, we define two more operators on $\Lambda^{(n)}$ by
\begin{align*}
\Delta_f  \widetilde{H}_{\mu} &= f[B_{\mu}] \widetilde{H}_{\mu} \ \ \ \ \ \ \ \ \ \ \ \ 
\Delta^{\prime}_f  \widetilde{H}_{\mu} = f[B_{\mu}-1] \widetilde{H}_{\mu}.
\end{align*}
Here, we have used the notation that, for a symmetric Laurent polynomial $f$ and a sum $S = s_1 + \ldots + s_k$ of monic monomials, $f[S]$ is equal to the specialization of $f$ at $x_1=s_1, \ldots, x_k=s_k$, where the remaining variables are set equal to zero. Note that both operators are linear in their subscripts. The operator $\Delta_f$, at least in the case where $f$ is a symmetric function, appears often in the study of diagonal harmonics \cite{haglund-book}. Furthermore, it is clear that $\Delta_{e_n} = \nabla$ when applied to $\Lambda^{(n)}$, so $\nabla e_n = \Delta_{e_n} e_n$. We allow for $f$ to be a symmetric Laurent polynomial because it provides a way to obtain negative powers of $\nabla$ in terms of our operators via the identity
\begin{align}
\nabla^{-1} &= \Delta_{m_{(-1)^n}}
\end{align}
on $\Lambda^{(n)}$.

Although the operator $\Delta^{\prime}_f$ is not as common, it can be connected to $\Delta_{f}$ via the identity
\begin{align}
\label{delta-prime}
\Delta_{m_{\rho}} = \Delta^{\prime}_{m_{\rho}} +  \sum_{\xi} \Delta^{\prime}_{m_{\xi}}
\end{align}
where the sum is over all $\xi$ which can be obtained by removing one part from $\rho$. In particular,
\begin{align}
\label{delta-prime-e}
\Delta_{e_{k}} &= \Delta^{\prime}_{e_{k}} + \Delta^{\prime}_{e_{k-1}}
\end{align} 
which, combined with the fact that $\Delta^{\prime}_{f} g = 0$ if the degree of $f$ is greater than or equal to the degree of $g$, implies $\nabla e_n = \Delta^{\prime}_{e_{n-1}} e_n$. 

We will also make use of the Pieri and skew Pieri coefficients of $\widetilde{H}_{\mu}$. We define the skewing operator on $\Lambda$ by insisting that
\begin{align*}
\langle f^{\perp} g, h \rangle &= \langle g, fh \rangle
\end{align*}
for any symmetric functions $f$, $g$, and $h$. Here and in the sequel, we use the Hall inner product on symmetric functions. Then the skew Pieri coefficients $c_{\mu, \nu}$ are defined by 
\begin{align*}
e_{1}^{\perp} \widetilde{H}_{\mu} &= \sum_{\nu \rightarrow \mu} c_{\mu, \nu} \widetilde{H}_{\nu} 
\end{align*}
where the sum is over all partitions $\nu$ that can be obtained by removing a single cell from $\mu$. In \cite{tesler-constant}, the authors use a constant term algorithm to provide a formula for $\tes{\hooks}{q}{t}$ for any vector $\hooks$ of positive integers in terms of the skewing operator. We will also use the Pieri coefficients\footnote{Some authors do not divide by  $M$ in the definition of the Pieri coefficients.} $d_{\mu, \nu}$, defined by
\begin{align*}
\frac{e_1}{M} \widetilde{H}_{\nu} &= \sum_{\mu \leftarrow \nu} d_{\mu, \nu} \widetilde{H}_{\mu} 
\end{align*}
where the sum is over all $\mu$ that can be created by adding a cell to $\nu$.

Finally, we will employ the following standard notation for $q,t$- and $q$-analogs of integers:
\begin{align*}
\qtint{n} &= \frac{q^n - t^n}{q-t} \\
\qint{n} &= [n]_{q,1} = \frac{q^n - 1}{q-1} .
\end{align*}
Note that $\qtint{n} \in \mathbb{N}[q,t]$ if $n \geq 0$ and $\qtint{n} \in \mathbb{Z}[1/q,1/t]$ if $n \leq 0$. This implies $\tes{\hooks}{q}{t} \in \mathbb{Z}[q,t,1/q,1/t]$ for any $\hooks \in \mathbb{Z}^n$.

\section{Virtual Hilbert series}
\label{sec:hilbert}

In this section, we use new symmetric function specializations to derive an algebraic interpretation for $\tes{\hooks}{q}{t}$ for any vector of integers $\hooks$. Our interpretation generalizes the formulas in \cite{tesler-hilbert, tesler-constant}. 

\subsection{Definitions and connections to Tesler functions}
\label{ssec:results}
Given any $\hooks \in \mathbb{Z}^{n-1}$ and $\mu \vdash n$, we make the following recursive definition.
\begin{align*}
F^{\hooks}_{\mu} &= \sum_{\nu \rightarrow \mu} c_{\mu, \nu} (T_{\mu} / T_{\nu})^{\hooks_{1}} F^{(\hooks_2, \ldots, \hooks_{n-1})}_{\nu} \\
F^{()}_{(1)} &= 1
\end{align*}
It is worth noting that $F^{0^{n-1}}_{\mu} = \hilb \widetilde{H}_{\mu}$, the Hilbert series of the Garsia-Haiman module associated with $\widetilde{H}_\mu$,  which is sometimes denoted $F_{\mu}$. As a result, $F^{\hooks}_{\mu}$ can be thought of as a modification of this Hilbert series. The famous $n!$ conjecture of Garsia and Haiman, proved in \cite{haiman-positivity}, is simply the statement that setting $q=t=1$ in $F_{\mu}$ yields $n!$. We list some open questions about the $F^{\hooks}_{\mu}$ below.
\begin{itemize}
\item Even though $c_{\mu,\nu}$ is generally in $\mathbb{Q}(q,t)$, computations in Sage suggest that $F^{\hooks}_{\mu}$ is always in $\mathbb{Z}[q,t,1/q,1/t]$ and $F^{\hooks}_{\mu} \in \mathbb{Z}[q,t]$ if $\hooks \in \mathbb{N}^n$. Is it true in general that $F^{\hooks}_{\mu} \in \mathbb{Z}[q,t,1/q,1/t]$ and that $\hooks \in \mathbb{N}^n$ implies $F^{\hooks}_{\mu} \in \mathbb{Z}[q,t]$?
\item For which $\hooks, \mu$ is $F^{\hooks}_{\mu} \in \mathbb{N}[q,t]$?
\item \cite{hhl} provides a combinatorial formula for $F_{\mu}$. Is there a similar formula for $F^{\hooks}_{\mu}$?
\end{itemize}

Now we define a map 
\begin{align*}
\vhilb_{\hooks} : \Lambda^{(n)} &\to \mathbb{Q}(q,t) \\
 \widetilde{H}_{\mu} &\mapsto F^{\hooks}_{\mu}
\end{align*}
We will sometimes refer to $\vhilb_{\hooks} f$ as the \emph{virtual Hilbert series} of $f$ with respect to $\hooks$. We can justify this terminology by noting that $F^{0^{n-1}}_{\mu} = F_{\mu}$ implies
\begin{align}
\vhilb_{0^{n-1}} &= \hilb 
\end{align}
on $\Lambda^{(n)}$. Furthermore, we have
\begin{align}
\vhilb_{k^{n-1}} &= \hilb \nabla^k   .
\end{align}
for any $k \in \mathbb{Z}$ on $\Lambda^{(n)}$. The following result gives an algebraic interpretation for $\tes{\hooks}{q}{t}$ for any $\hooks \in \mathbb{Z}^{n-1}$. We note that the right-hand side is equivalent to the right-hand side of I.9 in \cite{tesler-constant} if each entry of $\hooks$ is positive.

\begin{thm}
\label{thm:tes-hilb}
For any $\hooks \in \mathbb{Z}^{n-1}$, we have
\begin{align*}
\tes{\hooks}{q}{t} &= \frac{(-1)^{n-1}}{[n]_q [n]_t} \vhilb_{\hooks} p_{n} .
\end{align*}
\end{thm}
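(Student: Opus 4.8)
The plan is to prove the identity by induction on $n$, matching a recursion for $\vhilb_{\hooks}$ against a recursion for the Tesler sum obtained by peeling off the first row of each matrix. The base case $n=1$ is immediate, since both sides equal $1$. The engine on the algebraic side is an operator factorization of the virtual Hilbert series: directly from the recursive definition of $F^{\hooks}_\mu$ and the eigenvalue relation $\nabla^{k}\widetilde{H}_\mu = T_\mu^{k}\widetilde{H}_\mu$, one checks that
$$\vhilb_{\hooks} = \vhilb_{\hooks^{\prime}}\circ \nabla^{-\hooks_1}\, e_1^{\perp}\, \nabla^{\hooks_1}$$
on $\Lambda^{(n)}$, where $\hooks^{\prime}=(\hooks_2,\ldots,\hooks_{n-1})$, since applying the right-hand side to $\widetilde{H}_\mu$ reproduces $\sum_{\nu\to\mu}c_{\mu,\nu}(T_\mu/T_\nu)^{\hooks_1}F^{\hooks^{\prime}}_\nu = F^{\hooks}_\mu$. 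In particular $\vhilb_{\hooks}p_n = \vhilb_{\hooks^{\prime}}\bigl(\nabla^{-\hooks_1}e_1^{\perp}\nabla^{\hooks_1}p_n\bigr)$.

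On the combinatorial side I would split each matrix $U\in\tesset{\hooks}$ according to its first row $r=(U_{1,1},\ldots,U_{1,n-1})$: the entries of $r$ are all of one sign, not all zero, and sum to $\hooks_1$, while deleting the first row and column leaves a Tesler matrix whose hook sums are $(\hooks_2+U_{1,2},\ldots,\hooks_{n-1}+U_{1,n-1})$. Because $\wt$ is multiplicative over nonzero entries, and the $M$-power and the sign both split off the first row cleanly, this gives a recursion $\tes{\hooks}{q}{t}=\sum_{r}W(r)\,\tes{\hooks^{\prime}+r^{\prime}}{q}{t}$ with an explicit first-row weight $W(r)$ and $r^{\prime}=(U_{1,2},\ldots,U_{1,n-1})$. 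Substituting the inductive hypothesis reduces the theorem to a single symmetric-function identity: that $\vhilb_{\hooks^{\prime}}$ applied to $\nabla^{-\hooks_1}e_1^{\perp}\nabla^{\hooks_1}p_n$ equals $-\tfrac{[n]_q[n]_t}{[n-1]_q[n-1]_t}\sum_r W(r)\,\vhilb_{\hooks^{\prime}+r^{\prime}}p_{n-1}$.

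Establishing this last identity is the heart of the matter and the step I expect to be the main obstacle. I would attack it by expanding $\nabla^{-\hooks_1}e_1^{\perp}\nabla^{\hooks_1}p_n$ in the modified Macdonald basis, using the classical fact that the expansion of $p_n$ in the $\widetilde{H}_\mu$ basis is supported on hook shapes, together with the explicit skew-Pieri coefficients $c_{\mu,\nu}$ and the monomials $T_\mu$ for hooks. The difficulty is bookkeeping: the shift $\hooks^{\prime}\mapsto\hooks^{\prime}+r^{\prime}$ on the Tesler side corresponds to inserting $\nabla$-powers at each removal step of the $F$-recursion (and is emphatically \emph{not} a uniform $\nabla$-power), so one must show that the distribution of the first-row entries exactly reproduces the $(T_\mu/T_\nu)^{\hooks_1}$-weighted skew-Pieri expansion, with the $\qtint{U_{i,j}}$-factors, the signs, and the $M$-powers of $W(r)$ matching the Pieri data and the leftover scalar collapsing to $[n]_q[n]_t/[n-1]_q[n-1]_t$. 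A useful consistency check, and the source of the prefactor, is the case $n=2$: there $\nabla^{-k}e_1^{\perp}\nabla^{k}p_2 = -(1+q)(1+t)\,\qtint{k}\,p_1$, which yields $\vhilb_{(k)}p_2=-(1+q)(1+t)\qtint{k}$ and hence $\tes{(k)}{q}{t}=\qtint{k}$, in agreement with the single $1\times 1$ matrix $[k]$.

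An alternative route, suggested by the remark preceding the theorem, is to prove the identity first for $\hooks\in(\mathbb{Z}_{>0})^{n-1}$ by identifying the right-hand side with formula I.9 of \cite{tesler-constant}, and then to extend to all $\hooks\in\mathbb{Z}^{n-1}$. Since $\vhilb_{\hooks}p_n$ is visibly a Laurent polynomial in the quantities $q^{\hooks_i}$ and $t^{\hooks_i}$ (each factor $(T_\mu/T_\nu)^{\hooks_i}$ being a monomial in them), such an extension would follow from showing the same for $\tes{\hooks}{q}{t}$, for instance via a constant-term formula for the signed, essential Tesler sum in which the $\hooks_i$ appear as exponents. On this route the obstacle shifts to verifying that such a constant-term formula remains valid for negative and zero hook sums, which is precisely where the signed and essential conditions, and the signs built into $\wt$, are needed.
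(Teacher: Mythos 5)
Your setup is sound as far as it goes: the operator identity $\vhilb_{\hooks} = \vhilb_{\hooks'}\circ\nabla^{-\hooks_1}\,e_1^{\perp}\,\nabla^{\hooks_1}$ on $\Lambda^{(n)}$ is correct and is just a restatement of the recursion defining $F^{\hooks}_{\mu}$, and the first-row peeling of Tesler matrices does give a valid recursion $\tes{\hooks}{q}{t}=\sum_{r}W(r)\,\tes{\hooks'+r'}{q}{t}$, since the first column of a Tesler matrix is supported on $U_{1,1}$ and the weight factors over the first row. But the proof stops exactly where the theorem lives. The identity you isolate --- that $\vhilb_{\hooks'}\bigl(\nabla^{-\hooks_1}e_1^{\perp}\nabla^{\hooks_1}p_n\bigr)$ equals the $W(r)$-weighted sum of the $\vhilb_{\hooks'+r'}p_{n-1}$ --- is not a bookkeeping lemma; it is essentially the theorem restated one level down, with the added complication (which you yourself note) that the shift $\hooks'\mapsto\hooks'+r'$ cannot be absorbed into a single operator applied before $\vhilb_{\hooks'}$, so the induction hypothesis does not directly engage with it. No argument for it is given. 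Moreover, the tool you propose for attacking it is unavailable: $p_n$ is \emph{not} supported on hook shapes in the modified Macdonald basis. One has $\frac{(-1)^{n-1}}{[n]_q[n]_t}p_n=\sum_{\mu\vdash n}\frac{M\Pi_{\mu}}{w_{\mu}}\widetilde{H}_{\mu}$ with $\Pi_{\mu}\neq 0$ for every $\mu$ (for instance the coefficient of $\widetilde{H}_{(2,2)}$ in $p_4$ is nonzero); hook support holds for the Schur expansion of $p_n$, not the Macdonald one.

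The paper closes this gap with machinery your proposal never reaches: it expands $p_n$ with the explicit coefficients $M\Pi_{\mu}/w_{\mu}$, converts skew Pieri to Pieri coefficients via $c_{\mu,\nu}/w_{\mu}=d_{\mu,\nu}/w_{\nu}$ and $\Pi_{\mu}=(1-T)\Pi_{\nu}$, evaluates $\sum_{\mu\leftarrow\nu}d_{\mu,\nu}(1-T)T^{k}$ in closed form as $\pm e_{k}[MB_{\nu}]/M$ for \emph{all} $k\in\mathbb{Z}$ (the negative-$k$ case needs a separate argument via $d_{\mu,\nu}=\overline{d_{\mu,\nu}}/(qtT)$), and then iterates, tracking how these quantities transform under $B_{\nu}\mapsto B_{\nu}+T$; the resulting expression is matched to the Tesler sum by a recursion that adds a new last column and redistributes diagonal entries, not by deleting the first row. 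Your alternative route has the same status: to extend from positive $\hooks$ to all of $\mathbb{Z}^{n-1}$ you would need both sides to be Laurent polynomials in formal variables $q^{\hooks_i},t^{\hooks_i}$, and establishing this for $\tes{\hooks}{q}{t}$ --- where the index set $\tesset{\hooks}$ changes discretely with $\hooks$ and the signed and essential conditions must be controlled --- is exactly the constant-term analysis you defer. In short: correct scaffolding on both sides, but the load-bearing step is missing on both routes.
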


If we are willing to restrict our attention to vectors that begin with a 1, we can simplify the right-hand side of Theorem \ref{thm:tes-hilb} slightly. We also obtain a direct generalization of the results in \cite{tesler-hilbert}.

\begin{cor}
\label{cor:tes-hilb1}
For any $\hooks \in \mathbb{Z}^{n-1}$, we have
\begin{align*}
\tes{(1, \hooks)}{q}{t} &= \vhilb_{\hooks} e_{n} .
 \end{align*}
\end{cor}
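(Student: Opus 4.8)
The plan is to deduce the corollary directly from Theorem~\ref{thm:tes-hilb}. Applying that theorem to the hook-sum vector $(1,\hooks)\in\mathbb{Z}^{n}$, so that the ambient degree is $n+1$, gives
\[
\tes{(1,\hooks)}{q}{t} = \frac{(-1)^{n}}{[n+1]_q\,[n+1]_t}\,\vhilb_{(1,\hooks)}\, p_{n+1}.
\]
Thus it suffices to prove that $\vhilb_{(1,\hooks)}\,p_{n+1} = (-1)^{n}\,[n+1]_q\,[n+1]_t\,\vhilb_{\hooks}\,e_{n}$: substituting this into the displayed equation cancels the scalar prefactor and leaves exactly $\vhilb_{\hooks}\,e_n$, which is the claimed right-hand side.

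First I would isolate the effect of prepending a $1$ to the hook vector. For $\mu\vdash n+1$ the recursive definition gives $F^{(1,\hooks)}_{\mu}=\sum_{\nu\rightarrow\mu} c_{\mu,\nu}(T_{\mu}/T_{\nu})^{1}F^{\hooks}_{\nu}$, and using $\nabla\widetilde{H}_{\mu}=T_{\mu}\widetilde{H}_{\mu}$ together with $e_{1}^{\perp}\widetilde{H}_{\mu}=\sum_{\nu\rightarrow\mu}c_{\mu,\nu}\widetilde{H}_{\nu}$ one checks that the single ``$\hooks_1=1$'' step is precisely the operator $\nabla^{-1}e_{1}^{\perp}\nabla$, namely
\[
\sum_{\nu\rightarrow\mu} c_{\mu,\nu}\,\frac{T_{\mu}}{T_{\nu}}\,\widetilde{H}_{\nu} \;=\; \nabla^{-1}e_{1}^{\perp}\nabla\,\widetilde{H}_{\mu}.
\]
Since $F^{\hooks}_{\nu}=\vhilb_{\hooks}\widetilde{H}_{\nu}$ and $F^{(1,\hooks)}_{\mu}=\vhilb_{(1,\hooks)}\widetilde{H}_{\mu}$, linearity promotes this to the bridge identity $\vhilb_{(1,\hooks)}\,f=\vhilb_{\hooks}\!\left(\nabla^{-1}e_{1}^{\perp}\nabla f\right)$ for every $f\in\Lambda^{(n+1)}$. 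Taking $f=p_{n+1}$ reduces the corollary, for all $\hooks$ simultaneously, to the single symmetric-function identity
\[
\nabla^{-1}e_{1}^{\perp}\nabla\,p_{n+1} \;=\; (-1)^{n}\,[n+1]_q\,[n+1]_t\,e_{n}.
\]

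The main obstacle is this last identity, which is a statement purely about $\nabla$, $e_1^{\perp}$, and the power-sum and elementary bases, with no dependence on $\hooks$. To prove it I would expand $p_{n+1}$ and $e_{n}$ in the modified Macdonald basis, apply $\nabla$ (which merely rescales each $\widetilde{H}_{\mu}$ by $T_{\mu}$) and then $e_1^{\perp}$ through the skew Pieri coefficients $c_{\mu,\nu}$, and compare the coefficient of $\widetilde{H}_{\nu}$ for each $\nu\vdash n$; this turns the identity into a finite sum over the cells addable to $\nu$, where the known closed forms for $T_{\mu}/T_{\nu}$ and $c_{\mu,\nu}$ cause the sum to simplify to the scalar $(-1)^{n}[n+1]_q[n+1]_t$ times the $e_n$-coefficient. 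Equivalently, the identity can be extracted from standard $\nabla$-manipulations in the Macdonald-polynomial literature. As a sanity check, the cases $n=0$ and $n=1$ can be verified by hand: for $n=1$, expanding $p_2$ in the $\widetilde{H}$ basis and applying $\nabla$, $e_1^{\perp}$, and $\nabla^{-1}$ yields $\nabla^{-1}e_1^{\perp}\nabla\,p_2 = -(1+q)(1+t)\,e_1$, which is exactly $(-1)^{1}[2]_q[2]_t\,e_1$.

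Finally I would record the consistency of the result with known special cases, which also pins down the normalization. For $\hooks=k^{n-1}$ the corollary specializes to $\tes{(1,k^{n-1})}{q}{t}=\vhilb_{k^{n-1}}\,e_n=\hilb\,\nabla^{k}e_n$, and the case $k=1$ recovers Haglund's theorem $\tes{1^n}{q}{t}=\hilb\,\nabla e_n$ from \cite{tesler-hilbert}. This confirms that the scalar factors have been tracked correctly and shows that Corollary~\ref{cor:tes-hilb1} is the promised direct generalization of the results in \cite{tesler-hilbert}.
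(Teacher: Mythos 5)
Your argument is correct, but it takes a genuinely different route from the paper's. The paper disposes of Corollary \ref{cor:tes-hilb1} in one sentence: rerun the entire inductive argument of Theorem \ref{thm:tes-hilb}, replacing the Macdonald expansion of $p_n$ by the expansion $e_n = \sum_{\mu\vdash n} M B_\mu\Pi_\mu w_\mu^{-1}\widetilde{H}_\mu$; the extra factor $B_\mu$ is what seeds the first row (with hook sum $1$) of the $n\times n$ Tesler matrices in the recursive bookkeeping. You instead treat the corollary as a formal consequence of the theorem one degree higher: you apply Theorem \ref{thm:tes-hilb} to $(1,\hooks)\in\mathbb{Z}^{n}$ in degree $n+1$, observe that prepending a $1$ to the hook vector is implemented by the conjugated skewing operator, i.e.\ $\vhilb_{(1,\hooks)} = \vhilb_{\hooks}\circ\nabla^{-1}e_1^{\perp}\nabla$ on $\Lambda^{(n+1)}$ (which is a correct and clean reading of the recursive definition of $F^{\hooks}_{\mu}$), and thereby reduce everything to the single identity $\nabla^{-1}e_1^{\perp}\nabla\, p_{n+1} = (-1)^{n}[n+1]_q[n+1]_t\, e_n$. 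That identity, which you only sketch, does hold and in fact falls out of the machinery already in the paper: writing $\frac{(-1)^{n}}{[n+1]_q[n+1]_t}p_{n+1}=\sum_{\mu\vdash n+1}M\Pi_\mu w_\mu^{-1}\widetilde{H}_\mu$, applying the operator, and using $\Pi_\mu=(1-T)\Pi_\nu$ together with $c_{\mu,\nu}/w_\mu=d_{\mu,\nu}/w_\nu$ turns the coefficient of $\widetilde{H}_\nu$ into $\frac{M\Pi_\nu}{w_\nu}\sum_{\mu\leftarrow\nu}d_{\mu,\nu}(1-T)T$, and the $k=1$ case of \eqref{perp-d2} evaluates the inner sum to $e_1[MB_\nu]/M=B_\nu$, which is exactly the $e_n$ expansion; you should cite Lemma \ref{lemma:perp-d} there rather than appeal to ``standard $\nabla$-manipulations.'' What your route buys is that the corollary follows from the theorem without repeating the induction, and it isolates a reusable operator identity; what the paper's route buys is that it introduces no new identity and keeps the Tesler-matrix combinatorics uniform across the theorem and its corollary.
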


\subsection{Proof of Theorem \ref{thm:tes-hilb}}
\label{ssec:proofs}

We will need the following lemmas in order to prove Theorem \ref{thm:tes-hilb}. For $\mu \leftarrow \nu$, we abbreviate $T_{\mu}/T_{\nu}$ by $T$. We also use an overline to indicate the operation of replacing $q$ by $1/q$ and $t$ by $1/t$. For example, $\overline{q+qt} = 1/q + 1/(qt)$.

\begin{lemma}
\label{lemma:perp-d}
For any partition $\nu$ and $k \in \mathbb{Z}$, we have
\begin{align}
\label{perp-d1}
\sum_{\mu \leftarrow \nu} d_{\mu, \nu} T^k &= \left\{
\begin{array}{lr}
(-1)^{k-1}e_{k-1}[MB_{\nu}-1]/M & k > 0 \\
1/M & k = 0  \\
\frac{(-1)^{-k}}{qt} \overline{e_{-k}[MB_{\nu}-1]/M} & k < 0 
\end{array} \right.
\end{align}
and, as a result,
\begin{align}
\label{perp-d2}
\sum_{\mu \leftarrow \nu} d_{\mu, \nu} (1-T) T^k &= \left\{
\begin{array}{lr}
(-1)^{k-1}e_{k} [MB_{\nu}] / M & k > 0 \\
0 & k=0 \\
\frac{(-1)^{-k}}{qt}  \overline{e_{-k} [MB_{\nu} ] / M } & k < 0 .
\end{array} \right.
\end{align}
\end{lemma}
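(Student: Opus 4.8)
The plan is to reduce everything to a single known Pieri-type summation identity and then manipulate it formally. The key input should be a "dual Pieri" evaluation: a formula of the shape
\begin{align*}
\sum_{\mu \leftarrow \nu} d_{\mu,\nu}\, x^{B_\mu - B_\nu}
\end{align*}
or, equivalently after the substitution $T = T_\mu/T_\nu$, a generating identity for $\sum_{\mu \leftarrow \nu} d_{\mu,\nu} T^k$ in terms of elementary symmetric functions evaluated at the plethystic alphabet $MB_\nu$. Such an identity is exactly the type produced by the Macdonald–Pieri rule for $\widetilde H_\mu$ (see \cite{haglund-book, macdonald}); the factor $1/M$ built into our normalization of $d_{\mu,\nu}$ is precisely what makes the resulting expressions clean. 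So the first step I would take is to write down the master generating function $\sum_{\mu \leftarrow \nu} d_{\mu,\nu} \prod (\cdots)$ in closed form and recognize its coefficients as (signed) specialized elementary symmetric functions. For the $k=0$ case I would simply evaluate the sum of all Pieri coefficients, which collapses to $1/M$ by the normalization, giving the middle line of \eqref{perp-d1} directly.

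Next, for $k>0$, I would extract the coefficient of the appropriate power from the master identity to obtain $\sum_{\mu \leftarrow \nu} d_{\mu,\nu} T^k = (-1)^{k-1} e_{k-1}[MB_\nu - 1]/M$. The sign $(-1)^{k-1}$ and the shift by $-1$ in the plethystic argument are the combinatorial fingerprints of the Pieri rule, and matching them is mostly bookkeeping once the master identity is in hand. The $k<0$ case I would handle not by a separate computation but by a symmetry argument: applying the bar operation $q \mapsto 1/q$, $t \mapsto 1/t$ interchanges $T$ with $\overline{T}$ and sends $T_\mu/T_\nu$ to its reciprocal, so the negative powers $T^k$ for $k<0$ become positive powers of $\overline{T}$. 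Tracking how $d_{\mu,\nu}$, $B_\nu$, and the prefactor transform under the bar should convert the $k>0$ formula into the stated $k<0$ formula, with the extra $1/(qt)$ arising from the single-cell difference $B_\mu - B_\nu$ contributing one monomial $q^{a'}t^{\ell'}$ whose reciprocal is absorbed.

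For the second identity \eqref{perp-d2}, I would avoid any fresh summation and instead simply subtract consecutive cases of \eqref{perp-d1}. Writing
\begin{align*}
\sum_{\mu \leftarrow \nu} d_{\mu,\nu}(1-T)T^k = \sum_{\mu \leftarrow \nu} d_{\mu,\nu} T^k - \sum_{\mu \leftarrow \nu} d_{\mu,\nu} T^{k+1},
\end{align*}
and then substituting \eqref{perp-d1} for each term, the result should telescope. For $k>0$ this gives $(-1)^{k-1}(e_{k-1}[MB_\nu-1] + e_k[MB_\nu-1])/M$, and the key algebraic fact I would invoke is the plethystic addition identity $e_k[X-1] + e_{k-1}[X-1] = e_k[X]$ (the shift-by-one recurrence for elementary symmetric functions), which collapses this to $(-1)^{k-1}e_k[MB_\nu]/M$. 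The $k=0$ entry then follows since the two terms cancel against the $1/M$'s, yielding $0$, and the $k<0$ case again follows by the same plethystic recurrence applied inside the bar.

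The main obstacle I anticipate is \emph{not} the algebra but pinning down the precise form of the master Pieri identity for $\sum_{\mu \leftarrow \nu} d_{\mu,\nu} T^k$, including getting the signs, the $-1$ shift in $MB_\nu - 1$, and the $1/(qt)$ prefactor for negative $k$ exactly right; these normalization constants are notoriously easy to misplace, and the footnote in the excerpt warning that conventions for $d_{\mu,\nu}$ differ by a factor of $M$ is a signal that the bookkeeping here is delicate. Once the $k>0$ line of \eqref{perp-d1} is established with the correct constants, everything else—the $k=0$ and $k<0$ cases via the bar symmetry, and the entirety of \eqref{perp-d2} via telescoping and the $e_k[X-1]+e_{k-1}[X-1]=e_k[X]$ recurrence—should follow with only routine verification.
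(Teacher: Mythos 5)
Your plan matches the paper's proof essentially step for step: the $k>0$ case of \eqref{perp-d1} is taken as the known Zabrocki/Pieri identity, the $k<0$ case is deduced from it by applying the bar involution $q\mapsto 1/q$, $t\mapsto 1/t$, and \eqref{perp-d2} follows by subtracting consecutive instances of \eqref{perp-d1} together with the plethystic recurrence $e_k[X]=e_k[X-1]+e_{k-1}[X-1]$. The one ingredient you would still need to make precise is the transformation law $d_{\mu,\nu}=\overline{d_{\mu,\nu}}/(qtT)$, which the paper derives from the duality $\widetilde{H}_{\mu}=T_{\mu}\,\omega\,\overline{\widetilde{H}_{\mu}}$; note the factor $1/(qt)$ there comes from $\overline{M}=M/(qt)$ rather than from the added cell's monomial, which instead accounts for the $1/T$.
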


\begin{proof}
The $k \geq 0$ case of \eqref{perp-d1} was first noticed by Zabrocki and proved in \cite{macdonald-pieri}. The $k \geq 0$ of \eqref{perp-d2} was shown to follow from \eqref{perp-d1} in \cite{tesler-hilbert}. We begin by proving the $k < 0$ case of \eqref{perp-d1}, which follows from the $k \geq 0$ case of \eqref{perp-d1} due to the following argument of Garsia (personal communication, 2015). 

First, we need to relate $d_{\mu, \nu}$ to $\overline{d_{\mu, \nu}}$. We will use the identity $\widetilde{H}_{\mu} = T_{\mu} \omega \overline{\widetilde{H}_{\mu}}$ \cite{macdonald}. By definition, we have
\begin{align}
\sum_{\mu \leftarrow \nu} d_{\mu, \nu} \widetilde{H}_{\mu} &= \frac{e_1}{M} \widetilde{H}_{\nu} \\
&= \frac{e_1}{M} T_{\nu} \omega \overline{\widetilde{H}_{\nu}}\\
&= \frac{T_{\nu}}{qt} \omega \left( \frac{e_1}{\overline{M}} \overline{\widetilde{H}_{\nu}} \right) \\
&= \frac{T_{\nu}}{qt} \omega \left( \sum_{\mu \leftarrow \nu} \overline{d_{\mu, \nu}} \overline{\widetilde{H}_{\mu}} \right) \\
&= \frac{T_{\nu}}{qt} \sum_{\mu \leftarrow \nu} \overline{d_{\mu, \nu}} \omega \overline{\widetilde{H}_{\mu}} \\
&= \sum_{\mu \leftarrow \nu} \overline{d_{\mu, \nu}} \frac{1}{qtT} \widetilde{H}_{\mu}
\end{align}
which implies $d_{\mu, \nu} = \frac{1}{qtT} \overline{d_{\mu, \nu}}$. Now, for $k < 0$, we have
\begin{align}
\sum_{\mu, \nu} d_{\mu, \nu} T^k &= \frac{1}{qt} \sum_{\mu, \nu} \overline{d_{\mu, \nu}} T^{k-1} \\
&= \frac{1}{qt} \overline{ \sum_{\mu \leftarrow \nu} d_{\mu, \nu} T^{-k+1} } \\
&= \frac{(-1)^{-k}}{qt} \overline{ e_{-k}[MB_{\nu} - 1]/M } .
\end{align}
This proves \eqref{perp-d1}. The same plethystic computation used to derive Lemma 1 from (13) in \cite{tesler-hilbert} can be used to prove \eqref{perp-d2}.

\end{proof}

\begin{lemma}[Lemma 2 in \cite{tesler-hilbert}]
\label{lemma:qt-binom}
For any positive integer $k$,
\begin{align*}
(-1)^{k-1} e_k[M] / M &= \qtint{k} .
\end{align*}
\end{lemma}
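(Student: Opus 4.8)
The plan is to prove the equivalent statement $e_k[M] = (-1)^{k-1} M\, \qtint{k}$ for all $k \geq 1$ by comparing generating functions in an auxiliary variable $z$. The starting point is the plethystic reading of $M = (1-q)(1-t) = 1 + qt - q - t$ as the virtual alphabet with positive monomials $1, qt$ and negative monomials $q, t$. Since $\sum_{k \geq 0} e_k[X-Y] z^k = \prod_{x \in X}(1+xz)\big/\prod_{y \in Y}(1+yz)$ in the lambda-ring of symmetric functions, this gives
\[
\sum_{k \geq 0} e_k[M]\, z^k = \frac{(1+z)(1+qtz)}{(1+qz)(1+tz)}.
\]

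First I would record the companion series for the right-hand side. From $\qtint{k} = (q^k - t^k)/(q-t)$ a one-line partial-fraction computation yields $\sum_{k \geq 1} \qtint{k}\, z^k = z/\big((1-qz)(1-tz)\big)$, and replacing $z$ by $-z$ then gives
\[
\sum_{k \geq 1} (-1)^{k-1} \qtint{k}\, z^k = \frac{z}{(1+qz)(1+tz)}.
\]

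Then I would reduce the lemma to a single rational-function identity. Since $e_0[M] = 1$, summing the claimed identity $e_k[M] = (-1)^{k-1} M\, \qtint{k}$ against $z^k$ over $k \geq 1$ and invoking the two displays above shows that the lemma is equivalent to
\[
\frac{(1+z)(1+qtz)}{(1+qz)(1+tz)} - 1 = \frac{Mz}{(1+qz)(1+tz)}.
\]
Clearing the common denominator reduces this to the polynomial identity $(1+z)(1+qtz) - (1+qz)(1+tz) = Mz$; expanding both products cancels the constant and the $z^2$ terms and leaves $(1 + qt - q - t)z = (1-q)(1-t)z = Mz$. Extracting the coefficient of $z^k$ for each fixed $k \geq 1$ then yields the lemma.

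The argument involves no genuine difficulty: the only step that needs care is fixing the correct plethystic interpretation of $M$ as a signed alphabet, together with the matching numerator/denominator form (and signs) of the elementary-symmetric generating function. Once that convention is pinned down, everything reduces to the elementary polynomial expansion above.
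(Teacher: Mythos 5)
Your proof is correct: the plethystic generating function $\sum_{k\ge 0} e_k[M]z^k = \frac{(1+z)(1+qtz)}{(1+qz)(1+tz)}$ is right for the signed alphabet $M = 1+qt-q-t$, the series $\sum_{k\ge 1}(-1)^{k-1}\qtint{k}z^k = \frac{z}{(1+qz)(1+tz)}$ checks out, and the reduction to $(1+z)(1+qtz)-(1+qz)(1+tz)=Mz$ is a valid and complete verification. Note that the paper itself offers no proof of this lemma --- it is simply quoted from \cite{tesler-hilbert} --- so your argument supplies the standard generating-function derivation that the paper leaves to its reference; there is no in-paper proof to diverge from.
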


Our proof of Theorem \ref{thm:tes-hilb} will closely follow the main proof in \cite{tesler-hilbert}. First, we note that \cite{macdonald}
\begin{align}
\frac{(-1)^{n-1}}{[n]_q [n]_t} p_n &= \sum_{\mu \vdash n} \frac{M \Pi_{\mu} }{w_{\mu}} \widetilde{H}_{\mu} 
\end{align}
where 
\begin{align*}
\Pi_{\mu} &= \prod_{\substack{ c \in \mu \\ c \neq (0,0)}} (1-q^{\arm^{\prime}(c)} t^{\leg^{\prime}(c)}) \\
w_{\mu} &= \prod_{c \in \mu} (q^{\arm(c)} - t^{\leg(c)+1}) (t^{\leg(c)} - q^{\arm(c)+1}) .
\end{align*}
Thus, the right-hand side of Theorem \ref{thm:tes-hilb} equals
\begin{align}
\vhilb_{\hooks} \left( \sum_{\mu \vdash n} \frac{M \Pi_{\mu} }{w_{\mu}} \widetilde{H}_{\mu} \right) 
 &= \sum_{\mu \vdash n} \frac{M \Pi_{\mu}}{w_{\mu}} \sum_{\nu \rightarrow \mu} c_{\mu, \nu} T^{\hooks_1} F_{\nu}^{(\hooks_2, \ldots, \hooks_{n-1})} \\
 \label{tes-hilb1}
 &= \sum_{\nu \vdash n-1} M F_{\nu}^{(\hooks_2, \ldots, \hooks_{n-1})} \sum_{\mu \leftarrow \nu} \frac{\Pi_{\mu}}{w_{\mu}} c_{\mu, \nu} T^{\hooks_1} 
\end{align}
where we have used the definition of $\vhilb_{\hooks}$ and switched the order of the sums. Using
\begin{align}
\Pi_{\mu} &= (1-T) \Pi_{\nu} \\
\frac{c_{\mu, \nu}}{w_{\mu}} &= \frac{d_{\mu, \nu}}{w_{\nu}} 
\end{align}
from the definition of $\Pi_{\mu}$ and from \cite{qtcatalan}, respectively,  (\ref{tes-hilb1}) equals
\begin{align}
\label{tes-hilb2}
\sum_{\nu \vdash n-1} \frac{M \Pi_{\nu}}{w_{\nu}} F_{\nu}^{(\hooks_2, \ldots, \hooks_{n-1})} \sum_{\mu \leftarrow \nu} d_{\mu, \nu} (1-T) T^{\hooks_1}.
\end{align}
Now we use Lemma \ref{lemma:perp-d} to simplify the inner sum. For the sake of compactness, let $b_{k} = b_{k}(\nu)$ equal the right-hand side of Lemma \eqref{perp-d2} that corresponds to $k \in \mathbb{Z}$ and $a_k = (-1)^{k-1} e_k[M]/M$. 
Then  (\ref{tes-hilb2}) equals
\begin{align}
\label{tes-hilb3}
\sum_{\nu \vdash n-1} \frac{M \Pi_{\nu}}{w_{\nu}} F_{\nu}^{(\hooks_2, \ldots, \hooks_{n-1})} b_{\hooks_1}.
\end{align}

We would like to iterate this argument. The only real difficulty comes from $b_{\hooks_1}$. In particular, we need to know how to simplify expressions of the form $\left. b_{k} \right|_{B_{\nu} \mapsto B_{\nu}+T}$. From \cite{tesler-hilbert}, for $k > 0$ we get
\begin{align}
b_k |_{B_{\nu} \mapsto B_{\nu} + T} &= b_k + T^k a_k - \sum_{j=1}^{k-1} MT^{k-j} a_{k-j} b_j .
\end{align}
From Lemmas \ref{lemma:perp-d} and \ref{lemma:qt-binom}, we have
\begin{align}
\label{neg-b}
\overline{b_k} &= -qt b_{-k} \\
\label{neg-a}
\overline{a_k} &= -qt a_k .
\end{align}
We also have $\overline{M} = \frac{M}{qt}$ by definition. Using these identities, for $k > 0$ we compute
\begin{align} 
b_{-k} |_{B_{\nu} \mapsto B_{\nu} + T} &= - \frac{\overline{b_k |_{B_{\nu} \mapsto B_{\nu} + T}}}{qt} \\
&= - \frac{\overline{b_k} + T^{-k} \overline{a_k} - \sum_{j=1}^{k-1} \overline{M} T^{j-k} \overline{a_{k-j}} \overline{b_j}}{qt}  \\
&= b_{-k} + T^{-k} a_k + \sum_{j=1}^{k-j} M T^{j-k} a_{k-j} b_{-j} .
\end{align}

Iterating this procedure $r$ times, we obtain an expression for the right-hand side of Theorem \ref{thm:tes-hilb} of the form
\begin{align}
\sum_{\nu \vdash n-r+1} \frac{M \Pi_{\nu}}{w_{\nu}} F_{\nu}^{(\hooks_{r+1}, \ldots, \hooks_{n-1})} A^{\hooks}_{r}
\end{align}
where $A^{\hooks}_r$ is some expression in the $a_k$'s and $b_k$'s. Moreover, we can compute $A^{\hooks}_{r+1}$ from $A^{\hooks}_r$ by the following recursive procedure.
\begin{enumerate}
\item Replace the $b_k$'s in $A^{\hooks}_r$ with 
\begin{align*} 
\begin{array}{ll} 
b_k + T^k a_k - \sum_{j=1}^{k-1} MT^{k-j} a_{k-j} b_j & \text{if }k > 0 \\
b_{k} + T^{k} a_{-k} + \sum_{j=1}^{-k-j} M T^{-k+j} a_{-k-j} b_{-j} & \text{if } k < 0 .
\end{array}  
\end{align*} 
\item Expand to form a Laurent polynomial in $T$, say $\sum_{j} \gamma_j T^j$.
\item Replace each $T^j$ with $b_{j+\hooks_{r+1}}$. 
\end{enumerate}
At $r=n$, we obtain
\begin{align}
\frac{(-1)^{n-1}}{[n]_q [n]_t} \vhilb_{\hooks} p_n &= \sum_{\nu \vdash 1} \frac{M \Pi_{\nu}}{w_{\nu}} A^{\hooks}_n = A^{\hooks}_n
\end{align}
since $\Pi_{(1)} = 1$ and $w_{(1)} = M$. Our next goal is to prove the following expression for $A^{\hooks}_n$ in terms of Tesler matrices

\begin{lemma}
\label{lemma:tesler-expression}
\begin{align*}
A^{\hooks}_n &= \sum_{U \in \tesset{\hooks}} (-1)^{\pos(A) - \posrows(A)} M^{\nonzero(A) - n}  \prod_{U_{i,i} \neq 0} b_{U_{i,i}} \prod_{U_{i,j} \neq 0: \, i \neq j} a_{U_{i,j}}
\end{align*}
where we have set $\nu = (1)$. 
\end{lemma}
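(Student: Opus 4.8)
The plan is to prove the lemma by induction on the iteration count $r$, carrying along a refined statement that identifies each intermediate $A^{\hooks}_r$ with a weighted sum over $r \times r$ Tesler matrices, and then to read off the lemma from the terminal step where $\nu = (1)$. Concretely, I would prove that for every $r$,
\begin{align*}
A^{\hooks}_r = \sum_{V \in \tesset{(\hooks_1, \ldots, \hooks_r)}} (-1)^{\pos(V) - \posrows(V)} M^{\nonzero(V) - r} \prod_{V_{i,i} \neq 0} b_{V_{i,i}} \prod_{\substack{V_{i,j} \neq 0 \\ i \neq j}} a_{V_{i,j}},
\end{align*}
where the $b$'s are the $b_k(\nu)$ attached to the current partition $\nu$. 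The base case $A^{\hooks}_1 = b_{\hooks_1}$ is exactly the single $1 \times 1$ Tesler matrix $[\hooks_1]$, so everything reduces to showing that one pass of the recursive procedure (steps (1)--(3)) transforms the $r \times r$ sum into the $(r+1) \times (r+1)$ sum.

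The combinatorial content of that pass is that it builds the new column $r+1$ of the matrix. I would interpret step (1), the replacement of each diagonal factor $b_{V_{i,i}}$, as choosing the new above-diagonal entry $W_{i,r+1}$ in row $i$: the extracted power of $T$ records $W_{i,r+1}$ while the residual $b$-index records the reduced diagonal $W_{i,i} = V_{i,i} - W_{i,r+1}$. Step (2) collects the total exponent $j = \sum_i W_{i,r+1}$, the sum of the new column above the diagonal, and step (3)'s substitution $T^j \mapsto b_{j + \hooks_{r+1}}$ installs the new diagonal entry $W_{r+1,r+1} = \hooks_{r+1} + j$. Since shifting weight from $V_{i,i}$ into column $r+1$ leaves each earlier row sum unchanged, we keep $\hooksvec_i(W) = \hooks_i$ for $i \le r$, and the hook-sum identity gives $\hooksvec_{r+1}(W) = W_{r+1,r+1} - j = \hooks_{r+1}$; conversely every $(r+1) \times (r+1)$ Tesler matrix with the prescribed hook sums is obtained exactly once by folding its last column back onto the diagonal. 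This matches the $b$- and $a$-factors on the nose.

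The step I expect to be the main obstacle is the sign and $M$-power bookkeeping, and in particular the negative-entry case. I would verify the identity one row at a time. The explicit factor of $M$ accompanying each ``split'' branch of the expansion accounts for the extra nonzero entry it creates, and the shift from $r$ to $r+1$ in the exponent of $M$ is absorbed by the new diagonal entry $W_{r+1,r+1}$, which is always nonzero because the last row is essential (terms with $W_{r+1,r+1} = 0$ are killed by $b_0 = 0$). For the sign, a positive row gains a positive entry in its split branch, raising $\pos$ by one while fixing $\posrows$ and thus flipping the sign, which matches the $-M$ appearing for $k > 0$; the relocation branch $T^k a_k$ changes neither $\nonzero$ nor the sign. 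The genuinely delicate point is the $k < 0$ branch, obtained from the positive case via $\overline{a_k} = -qt\, a_k$, $\overline{b_k} = -qt\, b_{-k}$ (equations \eqref{neg-a} and \eqref{neg-b}) and $\overline{M} = M/qt$: it produces negative entries and carries a $+M$ instead of a $-M$. I would confirm that this $+M$ is exactly right, since adjoining a negative entry to a negative row alters neither $\pos$ nor $\posrows$, and that the signed condition is preserved throughout, so that the $k > 0$ and $k < 0$ expansions never mix within a single row.

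Finally, the recursion terminates at $\nu = (1)$, where $B_{\nu} = 1$ and hence $MB_{\nu} = M$. There both $a_k$ and $b_k(\nu)$ collapse to $\qtint{k}$ --- immediately from Lemma \ref{lemma:qt-binom} when $k > 0$, and from the same conjugation identities used above when $k < 0$ --- so the intermediate weight becomes the genuine Tesler weight $\wt(U;q,t)$ and the refined statement specializes to the claimed expression for $A^{\hooks}_n$.
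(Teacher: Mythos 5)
Your proposal is correct and follows essentially the same route as the paper: an induction on the iteration count in which each pass of the recursive procedure for $A^{\hooks}_r$ is matched with the operation of splitting diagonal entries into a new rightmost column and choosing the new diagonal entry to fix the last hook sum. Your treatment of the sign, the $M$-exponent, and the $k<0$ branch is more explicit than the paper's (which only sketches this bookkeeping), but it is the same argument.
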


\begin{proof}
We will prove this claim by induction on $n = \ell(\hooks)+1$.
The crux of the induction step is noticing that there is a recursion on Tesler matrices. Given a Tesler matrix $U \in \tesset{(\hooks_1, \ldots, \hooks_{p-1})}$, we create a Tesler matrix $V \in \tesset{(\hooks_1, \ldots, \hooks_{p})}$ as follows:
\begin{enumerate}
\item For each row $i$ in $U$, we ``move'' some of the diagonal entry $U_{i,i}$ to the far right to create $v_{i,p}$. 
\item To create row $p$, we choose $v_{p,p}$ such that the $p$th hook sum of $V$ is $\hooks_p$. 
\end{enumerate}
For example, one way to send a matrix in $\tesset{(3,-3,2)}$ to a matrix in $\tesset{(3,-3,2,0)}$ is depicted below.
\begin{align*}
\left[ \begin{array}{rrr}
0 & 3 & 0 \\
0 & -1 & -1 \\
0 & 0 & 1 \\
\end{array} \right] \mapsto
\left[ \begin{array}{rrrr}
0 & 1 & 0 & 2 \\
0 & 0 & -1 & -1 \\
0 & 0 & 1 & 0 \\
0 & 0 & 0 & 1
\end{array} \right]
\end{align*}
Now we check that this recursion matches the recursive procedure for generating $A^{\hooks}_n$. We should think of each $b_k$ as representing some integer $k$ on the diagonal in $U$ and each $a_k$ as representing an off-diagonal entry $k$. Then Step 1 of the procedure for generating $A^{\hooks}_n$ corresponds to moving some part of each diagonal entry into the new rightmost column. The power of $T$ tracks the entries in this new rightmost column. Note that a new $M$ appears each time we increase the number of nonzero entries in the matrix and a new $-1$ appears each time we create a new positive entry. Step 3 of the procedure corresponds to choosing the new bottom right entry such that the final hook sum is correct. 
\end{proof}

Finally, we note that, for $\nu = (1)$, 
\begin{align}
a_{|k|} = b_k = \qtint{k} 
\end{align}
for any integer $k$ by Lemma \ref{lemma:qt-binom}. This fact, along with Lemma \ref{lemma:tesler-expression}, concludes the proof of Theorem \ref{thm:tes-hilb}. Corollary \ref{cor:tes-hilb1} follows by essentially the same argument except we use the expansion
\begin{align}
e_n &= \sum_{\mu \vdash n} \frac{M B_{\mu} \Pi_{\mu}  }{w_{\mu}} \widetilde{H}_{\mu} .
\end{align}
instead of the expansion for $p_n$.

\section{Applications to delta operators}
\label{sec:harmonics}

The right-hand sides of Theorem \ref{thm:tes-hilb} and Corollary \ref{cor:tes-hilb1} bear some similarity to symmetric function expressions popular in the study of diagonal harmonics. In this section, we explore these connections and use the connections to prove new results about the Macdonald polynomial operators $\Delta_f$ and $\Delta^{\prime}_f$.

\subsection{From virtual Hilbert series to delta operators}
\label{ssec:poly}

Recall that we have defined an operator $\Delta^{\prime}_{f}$ on $\Lambda^{(n)}$ by stating that it acts on the Macdonald polynomials by 
\begin{align*}
\Delta^{\prime}_{f} \widetilde{H}_{\mu} &= f[B_{\mu}-1] \widetilde{H}_{\mu} .
\end{align*}
Although we will not be able to describe every virtual Hilbert series in terms of this operator, we do have the following result involving symmetric sums of virtual Hilbert series. We let $\sort$ be the map that removes the zeros from $\hooks$ and then sorts the remaining entries in weakly decreasing order. 

\begin{thm}
\label{thm:delta-prime-hilb}
For any Laurent partition $\rho$ and positive integer $n$,
\begin{align*}
\hilb \Delta^{\prime}_{m_{\rho}} &= \sum_{\substack{\hooks \in \mathbb{Z}^{n-1} \\ \sort(\hooks) = \rho}} \vhilb_{\hooks} 
\end{align*}
as operators on $\Lambda^{(n)}$.
\end{thm}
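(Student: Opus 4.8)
The plan is to verify the claimed operator identity on the Macdonald basis $\{\widetilde{H}_{\mu}\}_{\mu \vdash n}$, since both $\hilb \Delta^{\prime}_{m_{\rho}}$ and each $\vhilb_{\hooks}$ are $\mathbb{Q}(q,t)$-linear maps $\Lambda^{(n)} \to \mathbb{Q}(q,t)$ and are therefore determined by their values on this basis. The left-hand side is immediate: because $\Delta^{\prime}_{m_{\rho}} \widetilde{H}_{\mu} = m_{\rho}[B_{\mu}-1]\widetilde{H}_{\mu}$ and $m_{\rho}[B_{\mu}-1]$ is a scalar, applying $\hilb = \langle \cdot, p_{1^n}\rangle$ and using $\hilb \widetilde{H}_{\mu} = F_{\mu}$ gives
\[
\hilb \Delta^{\prime}_{m_{\rho}} \widetilde{H}_{\mu} = m_{\rho}[B_{\mu}-1]\, F_{\mu}.
\]
So it suffices to prove that $\sum_{\sort(\hooks) = \rho} F^{\hooks}_{\mu} = m_{\rho}[B_{\mu}-1]\,F_{\mu}$ for every $\mu \vdash n$.

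For the right-hand side, I would fully unfold the recursion defining $F^{\hooks}_{\mu}$. Iterating $F^{\hooks}_{\mu} = \sum_{\nu \to \mu} c_{\mu,\nu}(T_{\mu}/T_{\nu})^{\hooks_1} F^{(\hooks_2, \ldots, \hooks_{n-1})}_{\nu}$ down to $F^{()}_{(1)} = 1$ expresses $F^{\hooks}_{\mu}$ as a sum over saturated chains $\mu = \mu^{(0)} \supset \mu^{(1)} \supset \cdots \supset \mu^{(n-1)} = (1)$ in Young's lattice, the chain contributing
\[
\left(\prod_{i=1}^{n-1} c_{\mu^{(i-1)},\mu^{(i)}}\right) \prod_{i=1}^{n-1}\left(T_{\mu^{(i-1)}}/T_{\mu^{(i)}}\right)^{\hooks_i}.
\]
Writing $c_i$ for the cell removed at step $i$, the ratio $T_{\mu^{(i-1)}}/T_{\mu^{(i)}}$ equals the single monomial $q^{\arm^{\prime}(c_i)} t^{\leg^{\prime}(c_i)}$. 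The crucial observation is that every chain removes all cells of $\mu$ except the corner $(0,0)$, so the multiset $\{q^{\arm^{\prime}(c_i)} t^{\leg^{\prime}(c_i)} : 1 \le i \le n-1\}$ is exactly $\{q^{\arm^{\prime}(c)} t^{\leg^{\prime}(c)} : c \in \mu,\ c \neq (0,0)\}$, independent of the chain.

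I would then sum over $\hooks$ with $\sort(\hooks) = \rho$ and interchange with the (finite) sum over chains. For a fixed chain, setting $y_i = q^{\arm^{\prime}(c_i)} t^{\leg^{\prime}(c_i)}$, the set of $\hooks \in \mathbb{Z}^{n-1}$ with $\sort(\hooks) = \rho$ is precisely the set of rearrangements of the parts of $\rho$ padded by zeros to length $n-1$, so by the definition of the monomial symmetric (Laurent) function from Section~\ref{sec:background} we get $\sum_{\sort(\hooks)=\rho} \prod_i y_i^{\hooks_i} = m_{\rho}(y_1,\ldots,y_{n-1}) = m_{\rho}[B_{\mu}-1]$, where the last equality uses the chain-independence of the multiset $\{y_i\}$ together with $B_{\mu} - 1 = \sum_{c \neq (0,0)} q^{\arm^{\prime}(c)} t^{\leg^{\prime}(c)}$. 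Since this factor is identical for every chain, it pulls out of the chain sum, leaving
\[
\sum_{\sort(\hooks)=\rho} F^{\hooks}_{\mu} = m_{\rho}[B_{\mu}-1] \sum_{\text{chains}} \prod_{i=1}^{n-1} c_{\mu^{(i-1)},\mu^{(i)}} = m_{\rho}[B_{\mu}-1]\,F_{\mu},
\]
where the final equality is the $\hooks = 0^{n-1}$ specialization of the same chain expansion, which recovers $F^{0^{n-1}}_{\mu} = F_{\mu}$. Matching this against the left-hand side computed above completes the proof.

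I expect the main obstacle to be the bookkeeping in the plethystic identity $\sum_{\sort(\hooks)=\rho}\prod_i y_i^{\hooks_i} = m_{\rho}[B_{\mu}-1]$: one must argue carefully that summing a product of powers over all arrangements reconstructs the monomial symmetric function, that negative parts of the Laurent partition $\rho$ cause no difficulty (handled by the monomial basis of symmetric Laurent polynomials), and that the cell-monomial multiset is genuinely chain-independent so the factor can be extracted before the chain sum. The degenerate case $\ell(\rho) > n-1$ should be checked separately, where both sides vanish.
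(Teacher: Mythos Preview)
Your proposal is correct and follows essentially the same approach as the paper: both verify the identity on the Macdonald basis by fully unwinding the recursion for $F^{\hooks}_{\mu}$ into a sum over saturated chains (equivalently, standard Young tableaux), swapping the sum over $\hooks$ inside, and recognizing the resulting inner sum as $m_{\rho}[B_{\mu}-1]$ so that the chain-independent factor pulls out to leave $F_{\mu}$. Your explicit note that the cell-monomial multiset is chain-independent and your treatment of the degenerate case $\ell(\rho) > n-1$ are, if anything, slightly more careful than the paper's presentation.
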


\begin{proof}
We will show that these operators are equal by showing that their actions are equal on the Macdonald polynomial $\widetilde{H}_{\mu}$ for any $\mu \vdash n$. The right-hand side of the statement in the theorem equals
\begin{align}
\label{unrecurse}
\sum_{\substack{\hooks \in \mathbb{Z}^{n-1} \\ \sort(\hooks) = \rho}} F^{\hooks}_{\mu} \widetilde{H}_{\mu} .
\end{align}
Now we iterate through the recursive definition of $F^{\hooks}_{\mu}$ in order to obtain an alternative definition. Rather than just considering $\nu \rightarrow \mu$, we can consider all the saturated chains in Young's lattice from $\emptyset$ to $\mu$. These are in bijection with the standard Young tableaux of shape $\mu$, denoted $\syt(\mu)$. Let $c_S$ equal the product of all $c_{\mu, \nu}$'s that we encounter on the saturated chain from $\emptyset$ to $\mu$ associated with a given $S \in \syt(\mu)$. Given a cell $d$ in the Young diagram of $\mu$, let $S(d)$ denote the entry in cell $d$ in $S$. Then (\ref{unrecurse}) equals
\begin{align}
&\sum_{\substack{\hooks \in \mathbb{Z}^{n-1} \\ \sort(\hooks) = \rho}} \sum_{S \in \syt(\mu)} c_S \prod_{\substack{d \in \mu \\  d \neq (0,0) }} \left( q^{a^{\prime}(d)} t^{\el^{\prime}(d)} \right)^{\hooks_{n+1-S(d)}} \widetilde{H}_{\mu}  \\
=& \sum_{S \in \syt(\mu)} c_S \sum_{\substack{\hooks \in \mathbb{Z}^{n-1} \\ \sort(\hooks) = \rho}} \prod_{\substack{d \in \mu \\  d \neq (0,0) }} \left( q^{a^{\prime}(d)} t^{\el^{\prime}(d)} \right)^{\hooks_{n+1-S(d)}} \widetilde{H}_{\mu} \\
=& \sum_{S \in \syt(\mu)} c_S m_{\rho}[B_{\mu}-1] \widetilde{H}_{\mu} \\
=& F_{\mu} m_{\rho}[B_{\mu}-1] \widetilde{H}_{\mu} \\
=&  \hilb \Delta^{\prime}_{m_{\rho}} \widetilde{H}_{\mu}. \qedhere
\end{align}
\end{proof}

Applying Theorem \ref{thm:delta-prime-hilb} to the Tesler function expressions obtained in Section \ref{sec:hilbert}, we obtain the following identities.

\begin{cor}
\label{cor:delta-hilb}
\begin{align}
\label{delta-hilb-p}
\frac{(-1)^{n-1}}{[n]_q [n]_t} \hilb \Delta^{\prime}_{m_{\rho}}  p_{n} &= \sum_{\hooks: \sort(\hooks) = \rho} \tes{\hooks}{q}{t} \\
\label{delta-hilb-e}
\hilb \Delta^{\prime}_{m_{\rho}}  e_{n} &= \sum_{\hooks: \sort(\hooks) = \rho} \tes{(1, \hooks)}{q}{t} 
\end{align}
As a result, both left-hand sides are in $\mathbb{Z}[q,t,1/q,1/t]$. Furthermore, by the linearity in the subscript of $\Delta^{\prime}_{f}$ we have
\begin{align}
\label{contain1}
\frac{(-1)^{n-1}}{[n]_q [n]_t} \hilb \Delta^{\prime}_{f}  p_{n},\  \hilb \Delta^{\prime}_{f}  e_{n} \in \mathbb{Z}[q,t,1/q,1/t] \\
\label{contain2}
\frac{(-1)^{n-1}}{[n]_q [n]_t} \hilb \Delta^{\prime}_{g}  p_{n},\  \hilb \Delta^{\prime}_{g}  e_{n} \in \mathbb{Z}[q,t] 
\end{align}
for any $f \in \underline{\Lambda}_{\mathbb{Z}[q,t]}$, $g \in \Lambda_{\mathbb{Z}[q,t]}$. Finally, by (\ref{delta-prime}) we could replace $\Delta^{\prime}$ by $\Delta$ in (\ref{contain1}), (\ref{contain2}).
\end{cor}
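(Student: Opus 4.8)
The plan is to obtain the two displayed identities by feeding $p_n$ and $e_n$ into Theorem~\ref{thm:delta-prime-hilb} and then converting each virtual Hilbert series that appears into a Tesler function. For \eqref{delta-hilb-p} I would evaluate the operator identity $\hilb\Delta^{\prime}_{m_{\rho}}=\sum_{\hooks:\,\sort(\hooks)=\rho}\vhilb_{\hooks}$ at $p_n$, multiply through by $\frac{(-1)^{n-1}}{[n]_q [n]_t}$, and then apply Theorem~\ref{thm:tes-hilb} to each summand to replace $\frac{(-1)^{n-1}}{[n]_q [n]_t}\vhilb_{\hooks}p_n$ by $\tes{\hooks}{q}{t}$. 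The identity \eqref{delta-hilb-e} is the same computation run on $e_n$, using Corollary~\ref{cor:tes-hilb1} in place of Theorem~\ref{thm:tes-hilb} so that $\vhilb_{\hooks}e_n$ becomes $\tes{(1,\hooks)}{q}{t}$; here $\hooks$ runs over $\mathbb{Z}^{n-1}$ and $(1,\hooks)\in\mathbb{Z}^{n}$, so the right-hand objects are genuine $n\times n$ Tesler functions. Both sums are finite, since the $\hooks\in\mathbb{Z}^{n-1}$ with $\sort(\hooks)=\rho$ are exactly the rearrangements of $\rho$ padded by zeros (and there are none unless $\ell(\rho)\le n-1$). The ``as a result'' clause then follows at once: each $\tes{\hooks}{q}{t}$ lies in $\mathbb{Z}[q,t,1/q,1/t]$ by the remark closing Section~\ref{sec:background}, and a finite sum of such elements stays in that ring.

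For \eqref{contain1} and \eqref{contain2} I would pass from $m_{\rho}$ to a general subscript by linearity. Expanding $f=\sum_{\rho}c_{\rho}m_{\rho}$ over Laurent partitions with $c_{\rho}\in\mathbb{Z}[q,t]$, the linearity of $\Delta^{\prime}$ in its subscript and of $\hilb$ gives $\frac{(-1)^{n-1}}{[n]_q [n]_t}\hilb\Delta^{\prime}_{f}p_n=\sum_{\rho}c_{\rho}\sum_{\hooks:\,\sort(\hooks)=\rho}\tes{\hooks}{q}{t}$, a $\mathbb{Z}[q,t]$-combination of elements of $\mathbb{Z}[q,t,1/q,1/t]$, hence in that ring; the $e_n$ case is identical, proving \eqref{contain1}. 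For \eqref{contain2} the new ingredient is that $g\in\Lambda_{\mathbb{Z}[q,t]}$ expands in the $m_{\lambda}$ with $\lambda$ an ordinary partition, and for such $\lambda$ every $\hooks$ with $\sort(\hooks)=\lambda$ lies in $\mathbb{N}^{n-1}$ while each $(1,\hooks)$ lies in $\mathbb{N}^{n}$. The one step that does real work is the claim that $\hooks\in\mathbb{N}^{n}$ forces every $U\in\tesset{\hooks}$ to be entrywise non-negative; I would prove it by induction on the rows, noting that the sum of row $i$ from the diagonal rightward equals $\hooks_i$ plus the column-$i$ entries strictly above the diagonal, which are non-negative by induction, so this sum is non-negative, and it is nonzero because $U$ is essential, forcing the single-signed row $i$ to be non-negative. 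Granting this, for non-negative $U$ one has $\pos(U)=\nonzero(U)$ and $\posrows(U)=n$, so $\wt(U;q,t)=(-M)^{\nonzero(U)-n}\prod_{U_{i,j}\neq0}\qtint{U_{i,j}}$ visibly lies in $\mathbb{Z}[q,t]$; summing gives $\tes{\hooks}{q}{t},\,\tes{(1,\hooks)}{q}{t}\in\mathbb{Z}[q,t]$ and hence \eqref{contain2}.

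Finally, the replacement of $\Delta^{\prime}$ by $\Delta$ is one more linearity argument built on \eqref{delta-prime}: it writes $\Delta_{m_{\rho}}$ as a finite integer combination of operators $\Delta^{\prime}_{m_{\xi}}$ with $\xi$ obtained by deleting a part of $\rho$, so after applying $\hilb$ and the relevant normalization each summand already satisfies \eqref{contain1}, and expanding a general $f$ in the monomial basis completes the $\Delta$ version of \eqref{contain1}. For \eqref{contain2} one also observes that deleting a part of an ordinary partition yields an ordinary partition, so every $\xi$ that occurs is a genuine partition and the sharper $\mathbb{Z}[q,t]$ membership is preserved. The only genuinely non-formal step in the whole argument is the non-negativity lemma for Tesler matrices with non-negative hook sums; everything else is linearity and bookkeeping with the identities already established.
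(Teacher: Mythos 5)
Your proposal is correct and follows essentially the same route as the paper, which derives \eqref{delta-hilb-p} and \eqref{delta-hilb-e} by applying Theorem \ref{thm:delta-prime-hilb} to Theorem \ref{thm:tes-hilb} and Corollary \ref{cor:tes-hilb1} and then obtains the containments by linearity in the subscript together with \eqref{delta-prime}. The one detail you supply that the paper leaves implicit is the induction showing that nonnegative hook sums force a Tesler matrix to be entrywise nonnegative (hence $\tes{\hooks}{q}{t}\in\mathbb{Z}[q,t]$ for \eqref{contain2}), and your argument for it is sound.
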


Corollary \ref{cor:delta-hilb} can be thought of as a more concrete version of a special case of Theorem 1.3 in \cite{bght-positivity}, in which the authors showed that $\Delta_{f} \Lambda_{\mathbb{Z}[q,t]} \subseteq \Lambda_{\mathbb{Z}[q,t]}$ for any $f \in \Lambda_{\mathbb{Z}[q,t]}$. Corollary \ref{cor:delta-hilb} provides the first direct formulas for Hilbert series of expressions of this type.

It may be of interest to the reader to use Corollary \ref{cor:delta-hilb} in order to explicitly compute some $\hilb \Delta_{f} e_n$. Rather than state the exact analog of (\ref{delta-hilb-e}) for this case, we mention that the following process accomplishes this task.
\begin{enumerate}
\item Expand $f$ into variables $x_1, x_2, \ldots, x_{n-1}, 1$. 
\item Replace each monomial $x^{\hooks}$ in this expansion with  $\tes{(1,\hooks)}{q}{t}$.
\end{enumerate}
As an example, we compute $\hilb \Delta_{s_{3,2,1}} e_3$ using
\begin{align}
s_{3,2,1}(x_1, x_2, 1) &= x_1^3 x_2^2 + x_1^2 x_2^3 + x_1^3 x_2 + 2 x_1^2 x_2^2 + x_1 x_2^3 + x_1^2 x_2 + x_1 x_2^2 .
\end{align}
Replacing each monomial with its associated Tesler function, we get
\begin{align}
\hilb \Delta_{s_{3,2,1}} e_3 =& \tes{(1,3,2)}{q}{t} + \tes{(1,2,3)}{q}{t} + \tes{(1,3,1)}{q}{t}  \\
\nonumber
&+ 2\tes{(1,2,2)}{q}{t} + \tes{(1,1,3)}{q}{t} \\
\nonumber
&+ \tes{(1,2,1)}{q}{t} + \tes{(1,1,2)}{q}{t}. 
\end{align}
One concludes the calculation by computing each of the Tesler functions. We have not explored how this method compares to current methods for computing $\hilb \Delta_f e_n$ from a computational perspective.

\subsection{Positive formulas}
\label{ssec:pos}
In this subsection, we use Corollary \ref{cor:delta-hilb} to obtain formulas for $\hilb \Delta_{e_1} e_n$, $\hilb \frac{(-1)^{n-1}}{[n]_q [n]_t} \hilb \Delta_{e_2} p_n$, and $\hilb \Delta_{m_{-1}} e_n$. Each formula shows that the Hilbert series of the given symmetric function is positive with respect to some set of variables.

\begin{cor}
\label{cor:e1}
\begin{align*}
\hilb \Delta_{e_1} e_n &= \sum_{k=1}^{n} \binom{n}{k} \qtint{k} \\
\frac{(-1)^{n-1}}{[n]_q [n]_t} \hilb \Delta_{e_2}  p_n &=  \sum_{k=1}^{n-1} \binom{n-1}{k} \qtint{k} .
\end{align*}
In particular, the left-hand sides of both statements are in $\mathbb{N}[q,t]$.
\end{cor}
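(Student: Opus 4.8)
The plan is to reduce both identities to finite sums of Tesler functions with hook sums in $\{0,1\}$ using Corollary~\ref{cor:delta-hilb}, and then to evaluate those sums. For the first identity I would apply the substitution process described after Corollary~\ref{cor:delta-hilb} to $f = e_1$: since $e_1(x_1,\ldots,x_{n-1},1) = 1 + x_1 + \cdots + x_{n-1}$, this gives
\[
\hilb \Delta_{e_1} e_n = \tes{(1,0^{n-1})}{q}{t} + \sum_{i=1}^{n-1} \tes{(1,\epsilon_i)}{q}{t},
\]
where $\epsilon_i$ denotes the vector with a single $1$ in position $i$. The first term equals $\vhilb_{0^{n-1}} e_n = \hilb e_n = \langle e_n, p_{1^{n}}\rangle = 1$ by Corollary~\ref{cor:tes-hilb1} and the identity $\vhilb_{0^{n-1}} = \hilb$. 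For the second identity I would instead use $\Delta_{e_2} = \Delta^{\prime}_{e_2} + \Delta^{\prime}_{e_1}$ from~\eqref{delta-prime-e} together with~\eqref{delta-hilb-p}, obtaining
\[
\frac{(-1)^{n-1}}{[n]_q [n]_t} \hilb \Delta_{e_2} p_n = \sum_{\sort(\hooks) = (1,1)} \tes{\hooks}{q}{t} + \sum_{\sort(\hooks) = (1)} \tes{\hooks}{q}{t}.
\]
Once the closed forms are established, the final positivity claim is immediate: each $\binom{n}{k} \geq 0$ and $\qtint{k} \in \mathbb{N}[q,t]$ for $k \geq 1$.

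Equivalently---and this is how I would actually run the computation---I would pass back to the symmetric function side. Using the expansions $e_n = \sum_{\mu \vdash n} \frac{M B_\mu \Pi_\mu}{w_\mu} \widetilde{H}_\mu$ and $\frac{(-1)^{n-1}}{[n]_q [n]_t} p_n = \sum_{\mu \vdash n} \frac{M \Pi_\mu}{w_\mu} \widetilde{H}_\mu$ from Section~\ref{ssec:proofs}, together with $\Delta_{e_k} \widetilde{H}_\mu = e_k[B_\mu] \widetilde{H}_\mu$ and $\hilb \widetilde{H}_\mu = F_\mu$, both left-hand sides become partition sums of the shape $\sum_{\mu \vdash n} \frac{M \Pi_\mu F_\mu}{w_\mu} g[B_\mu]$, with $g = e_1^2$ for the first and $g = e_2$ for the second. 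Two instances of this sum come for free and are worth recording as warm-ups: $g = 1$ gives $\sum_\mu \frac{M \Pi_\mu F_\mu}{w_\mu}$, which vanishes for $n \geq 2$ because $\hilb p_n = \langle p_n, p_{1^{n}}\rangle = 0$, and $g = e_1$ gives the constant $1$, since multiplying the coefficient $\frac{M \Pi_\mu}{w_\mu}$ by $B_\mu$ turns the $p_n$-expansion into the $e_n$-expansion, i.e. $\Delta_{e_1} \frac{(-1)^{n-1}}{[n]_q [n]_t} p_n = e_n$ and hence its image under $\hilb$ is $1$.

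The crux is the evaluation of the two remaining sums, $\sum_\mu \frac{M B_\mu^2 \Pi_\mu F_\mu}{w_\mu}$ and $\sum_\mu \frac{M \Pi_\mu F_\mu}{w_\mu} e_2[B_\mu]$. On the combinatorial side this is a weighted enumeration of the Tesler matrices whose hook-sum vector is a $0/1$ vector with at most two ones. The plan is to organize this enumeration by the largest matrix entry $k$, which contributes the factor $\qtint{k}$, and to show that the surrounding data---the placement of the ones in the hook-sum vector and the forced remaining entries, after cancelling the rows with non-positive entries via a sign-reversing involution on the sign $(-1)^{\pos(U) - \posrows(U)}$---is counted by $\binom{n}{k}$, respectively $\binom{n-1}{k}$. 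I expect the main obstacle to be exactly this bookkeeping: the individual Tesler functions $\tes{(1,\epsilon_i)}{q}{t}$ are \emph{not} themselves single terms of the form $\binom{\cdot}{\cdot}\qtint{\cdot}$, so no termwise identification is available and the binomial coefficients only emerge after summing over all hook-sum patterns. An alternative that sidesteps the involution is an induction on $n$ driven by the add-a-row-and-column recursion from the proof of Lemma~\ref{lemma:tesler-expression}, tracking how a new rightmost column redistributes a diagonal entry; the inductive step would then have to reproduce the Pascal recursion $\binom{n}{k} = \binom{n-1}{k} + \binom{n-1}{k-1}$ against the splitting $\qtint{k} = q^{k-1} + t\,\qtint{k-1}$.
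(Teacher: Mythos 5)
Your setup is correct and matches the paper's opening move: writing $\Delta_{e_1}=\Delta'_{e_1}+\Delta'_{e_0}$ and using \eqref{delta-hilb-e} to get $\hilb\Delta_{e_1}e_n=\tes{(1,0^{n-1})}{q}{t}+\sum_{i}\tes{(1,0^i,1,0^{n-i-2})}{q}{t}$, and similarly for the $p_n$ statement via \eqref{delta-hilb-p}. Your warm-up observations ($\tes{(1,0^{n-1})}{q}{t}=\hilb e_n=1$, $\hilb p_n=0$ for $n\ge 2$, and $\Delta_{e_1}$ carrying the $p_n$-expansion to the $e_n$-expansion) are all correct. But the proof stops where the actual work begins. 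You explicitly defer ``the crux''---showing that the sum of these Tesler functions equals $\sum_k\binom{n}{k}\qtint{k}$---to one of two unexecuted strategies, and neither is developed far enough to be checkable. The involution organized by ``the largest matrix entry $k$'' is not obviously viable: the weight of a Tesler matrix is a product of $\qtint{U_{i,j}}$ over \emph{all} nonzero entries times powers of $M$ and signs, so there is no single entry that supplies the factor $\qtint{k}$; getting a single $q,t$-integer per surviving term requires substantial cancellation that you have not exhibited. The Pascal-recursion idea via $\qtint{k}=q^{k-1}+t\,\qtint{k-1}$ is likewise only a guess at what the inductive step should produce.

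The paper's actual mechanism is different from both of your sketches and is worth noting because it leaves the world of $\{0,1\}$ hook sums, which your plan implicitly stays inside. Using Lemma \ref{lemma:remove-1} (strip the first row of a Tesler matrix whose first hook sum is $1$) together with $\tes{\hooks}{q}{t}=0$ when $\hooks_1=0$, the paper derives the recursion $\hilb\Delta_{e_1}e_n=2\,\hilb\Delta_{e_1}e_{n-1}+\tes{(2,0^{n-2})}{q}{t}$, so the key auxiliary quantity is a Tesler function with a hook sum equal to $2$. That quantity satisfies its own recursion $\tes{(2,0^k)}{q}{t}=\qtint{2}\,\tes{(2,0^{k-1})}{q}{t}-M\,\hilb\Delta_{e_1}e_k$, obtained by a direct analysis of the first row, and the closed form $\tes{(2,0^k)}{q}{t}=\sum_i\bigl(\binom{k+1}{i-1}-\binom{k+1}{i}\bigr)\qtint{i}$ is then verified by extracting the coefficient of $q^at^b$. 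The binomial coefficients in the corollary emerge only from combining these two recursions; no termwise or involution-based identification is used. To complete your argument you would need to either carry out one of your two strategies in full (and for the involution, explain how products of $q,t$-integers collapse), or adopt the paper's route, which requires Lemma \ref{lemma:remove-1} and the hook-sum-$2$ computation that your plan does not anticipate.
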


\begin{cor}
\label{cor:m-1}
\begin{align*}
\hilb \Delta_{m_{-1}} e_n &= \left(1 - \frac{1}{qt}\right)^{n-1} .
\end{align*}
As a result, the left-hand side is in $\mathbb{N}\left[ -\frac{1}{qt} \right]$.
\end{cor}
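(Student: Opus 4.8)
The plan is to compute $\hilb \Delta_{m_{-1}} e_n$ combinatorially through the Tesler-function interpretation of Section~\ref{sec:hilbert}, in the same spirit as Corollary~\ref{cor:e1}. First I would reduce the delta-operator side to a sum of Tesler functions. Since $m_{-1} = p_{-1}$ expands in the alphabet $x_1,\dots,x_{n-1},1$ as $\sum_{i=1}^{n-1} x_i^{-1} + 1$, the substitution process described after Corollary~\ref{cor:delta-hilb} gives
\[
\hilb \Delta_{m_{-1}} e_n = \tes{(1,0^{n-1})}{q}{t} + \sum_{i=1}^{n-1} \tes{(1,0^{i-1},-1,0^{n-1-i})}{q}{t}.
\]
Equivalently one can use $\Delta_{m_{-1}} = \Delta^{\prime}_{m_{-1}} + \Delta^{\prime}_{1}$ from \eqref{delta-prime} (removing the single part of $\rho=(-1)$ leaves the empty partition, and $m_{()}=1$) together with \eqref{delta-hilb-e} and $\hilb e_n = 1$. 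Thus the right-hand side is $\sum_U \wt(U;q,t)$ over the disjoint union of the sets $\tesset{\hooks}$ whose hook-sum vectors satisfy $\hooks_1 = 1$, $\hooks_i \le 0$ for $i \ge 2$, and at most one $\hooks_i$ equal to $-1$.

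Next I would prove a structural lemma: every Tesler matrix $U$ with such a hook-sum vector has exactly one nonzero entry per row, and that entry is $\pm 1$. The cleanest route is induction on $n$ via the column-adding recursion from the proof of Lemma~\ref{lemma:tesler-expression}: collapsing the last row and column (folding $V_{i,n}$ back into $V_{i,i}$) sends such a $V$ to a Tesler matrix of size $n-1$ whose hook-sum vector $(\hooks_1,\dots,\hooks_{n-1})$ is of the same restricted type, so the inductive hypothesis applies; conversely, a short case check shows the row-sign condition forces every admissible choice of $V_{i,n}$ to keep each row of $V$ a single $\pm1$, with $n=1$ immediate. Granting this, $\nonzero(U)=n$ so the $M$-power vanishes, and a single-entry row is all-positive exactly when that entry is, so $\pos(U)=\posrows(U)$ and the sign is trivial. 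Hence $\wt(U;q,t)=\prod_i \qtint{\epsilon_i} = (-1/(qt))^{d(U)}$, where $\epsilon_i\in\{\pm1\}$ is the lone entry of row $i$, $d(U)$ counts the rows with $\epsilon_i=-1$, and we use $\qtint{1}=1$, $\qtint{-1}=-1/(qt)$; note $\epsilon_1=+1$ always since $\hooks_1=1$.

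With the weights reduced to $(-1/(qt))^{d(U)}$, the corollary becomes the enumerative claim that these matrices are counted by subsets: for each sign set $S=\{\,i:\epsilon_i=-1\,\}\subseteq\{2,\dots,n\}$ there is exactly one admissible matrix. Summing would then give $\sum_{S\subseteq\{2,\dots,n\}}(-1/(qt))^{|S|} = (1-1/(qt))^{n-1}$. To prove the bijection I would encode a single-entry-per-row matrix by its column map $\tau$ (with $\tau(i)\ge i$) together with the signs $\epsilon$, and show that for each fixed $\epsilon$ with $\epsilon_1=+1$ there is a unique $\tau$ making the column inflows $c_i = \sum_{k<i,\,\tau(k)=i}\epsilon_k$ satisfy $c_i\in\{\epsilon_i,\epsilon_i+1\}$ for all $i\ge2$, with the ``$+1$'' occurring at most once; i.e.\ the triangular routing of the signed tokens emitted by the rows is uniquely forced by the demands at the columns.

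I expect this final counting/bijection step to be the main obstacle. The routing $\tau$ is genuinely global --- a column with $\epsilon_i=+1$ must receive inflow while one with $\epsilon_i=-1$ must not overflow --- and one must establish both existence (the demands are always simultaneously satisfiable under $\tau(k)\ge k$) and uniqueness. One promising organization is a greedy right-to-left assignment of rows to columns, proving by a ballot/Dyck-type invariant that the partial demands can be met in exactly one way. Alternatively one can feed this into the recursion above, showing that each size-$(n-1)$ admissible matrix has exactly one extension with a positive bottom entry and exactly one with a negative bottom entry, so that the generating function in $x=-1/(qt)$ satisfies $f_n=(1+x)f_{n-1}$; this reduces the corollary to verifying that the diagonal structure of an admissible matrix is always balanced enough for this dichotomy to hold, which itself follows from the inflow constraint $c_i\in\{\epsilon_i,\epsilon_i+1\}$ applying to \emph{every} column $i\ge2$.
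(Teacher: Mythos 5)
Your reduction to Tesler functions is exactly the paper's first step, but from there you take a genuinely different route. The paper never looks inside the matrices: it uses Lemma~\ref{lemma:remove-1} to strip the first row and collapse intermediate zeros, obtaining the recursion $\hilb \Delta_{m_{-1}} e_n = \hilb \Delta_{m_{-1}} e_{n-1} + \tes{(1,-1,0^{n-2})}{q}{t}$, and then evaluates $\tes{(1,-1,0^k)}{q}{t} = -\tfrac{1}{qt}(1-\tfrac{1}{qt})^k$ by combining Lemma~\ref{lemma:remove-1} with the negation symmetry of Lemma~\ref{lemma:negative-hooks}, which converts the hook sums $(-1,0^i,1,\dots)$ back into $\overline{\hilb\Delta_{m_{-1}}e_{k+1}}$ and closes a second induction. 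That argument is about five lines once the two lemmas are in hand. Your approach instead analyzes the matrices directly, and the two claims you flag as obstacles do both hold, essentially by a token-passing argument: row $1$ emits a single $+1$; a row with inflow $+1$ and hook sum $0$ must re-emit a single $+1$; the unique row with hook sum $-1$ must have inflow $0$ and emits a single $-1$, which then propagates; and since there is only ever one $+1$ token and at most one $-1$ token in flight, and they cannot collide (a collision gives a zero row, or requires the $-1$ hook sum twice), no column ever receives inflow of absolute value $\ge 2$. This yields your structural lemma, and it also gives uniqueness for free: the positive rows and negative rows each form a single path visiting rows in increasing order, so each path is forced to be the increasing enumeration of its vertex set, and the position of the $-1$ in $\hooks$ is forced to be $\min S$. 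So for each $S\subseteq\{2,\dots,n\}$ there is exactly one matrix in the disjoint union, with weight $(-1/(qt))^{|S|}$, and the binomial-theorem sum finishes the proof. What your route buys is an explicit bijective explanation of where the $2^{n-1}$ terms of $(1-\tfrac{1}{qt})^{n-1}$ live among the Tesler matrices; what it costs is that you must prove the structural and uniqueness claims from scratch, whereas the paper's induction reuses machinery (Lemmas~\ref{lemma:remove-1} and~\ref{lemma:negative-hooks}) already needed elsewhere. As written your proposal is a correct plan with its two key combinatorial steps sketched rather than proved, but both close along the lines above.
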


In general, none of these symmetric functions are currently associated with modules, which means that direct formulas such as these are the only way to give positivity results at this point. In \cite{delta-conjecture}, Haglund, Remmel, and the author use a reciprocity identity to obtain the full Schur expansion of $\Delta_{e_1} e_n$, implying the $e_1$ statement in Corollary \ref{cor:e1}. Before we can prove Corollaries \ref{cor:e1} and \ref{cor:m-1}, we need the following lemma.

\begin{lemma}
\label{lemma:remove-1}
For any $\hooks \in \mathbb{Z}^{n}$,
\begin{align*}
\tes{(1,\hooks)}{q}{t} &= \tes{\hooks}{q}{t} + \sum_{i=1}^{n} \tes{(\hooks_1, \ldots, \hooks_{i-1}, \hooks_i + 1, \hooks_{i+1}, \ldots, \hooks_n)}{q}{t}.
\end{align*}
\end{lemma}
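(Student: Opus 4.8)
The plan is to prove this identity combinatorially by exhibiting a bijection (or a weight-preserving decomposition) on the underlying sets of Tesler matrices, matching the recursive structure already developed in the proof of Lemma \ref{lemma:tesler-expression}. The key observation is that a Tesler matrix $V \in \tesset{(1,\hooks)}$ has first hook sum equal to $1$, which means $V_{1,1} + V_{1,2} + \ldots + V_{1,n+1} = 1$ (there is no column-sum term for the first row). First I would analyze what the first row of such a $V$ can look like: since every row is entirely non-negative or entirely non-positive and the entries sum to $1$, the first row must be non-negative, and in fact it consists of a single $1$ in some position $V_{1,j}$ with all other first-row entries zero. This gives a natural case split according to the location $j$ of that unique $1$.

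The main idea is that deleting the first row and first column of $V$ produces a smaller Tesler matrix $U$, and I would track exactly how the hook sums change under this deletion. If the $1$ sits on the diagonal, i.e. $V_{1,1} = 1$, then deleting the first row and column removes nothing from the remaining hook sums, so $U \in \tesset{\hooks}$; this accounts for the leading $\tes{\hooks}{q}{t}$ term. If instead the $1$ sits off-diagonal in column $j = i+1$ for some $i \in \{1, \ldots, n\}$ (reindexing so that column $i+1$ of $V$ corresponds to the $i$th hook of $U$), then that $1$ contributes $-1$ to the $i$th hook sum of the deleted matrix via the subtracted column-sum term, so the surviving matrix $U$ has hook vector $(\hooks_1, \ldots, \hooks_i + 1, \ldots, \hooks_n)$; this accounts for the $i$th summand. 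The crucial weight bookkeeping is that in every case the single entry $V_{1,j} = 1$ carries weight $\qtint{1} = 1$, contributes $\pos(V) - \posrows(V)$ unchanged relative to $U$ (the first row is a positive singleton row, so it adds one positive entry and one all-positive row, and these cancel in the sign exponent), and increases $\nonzero(V) - (n+1)$ by the same amount as the extra dimension, so $M^{\nonzero(V) - (n+1)} = M^{\nonzero(U) - n}$. Hence $\wt(V;q,t) = \wt(U;q,t)$, and summing over the cases yields the stated identity term by term.

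I would organize the argument as: (1) show the first row of any $V \in \tesset{(1,\hooks)}$ is a standard basis vector $e_j$; (2) define the deletion map $V \mapsto U$ and verify it is a bijection from $\tesset{(1,\hooks)}$ onto the disjoint union of $\tesset{\hooks}$ and the $n$ shifted sets $\tesset{(\hooks_1, \ldots, \hooks_i+1, \ldots, \hooks_n)}$, indexed by the position $j$ of the $1$; and (3) verify the weight is preserved under deletion. For (2) the inverse map simply reinserts a first row $e_j$ and a zero first column, which is clearly well-defined and inverse to deletion, so the bijectivity is essentially immediate once the hook-sum arithmetic is checked. I expect the main obstacle to be a clean and careful handling of the hook-sum indexing under deletion — specifically confirming that column $i+1$ of $V$ maps to the hook that gets incremented by exactly $+1$ (because the removed $1$ was subtracted from that hook), and double-checking the boundary behavior when $j = n+1$ (the rightmost column) versus $j$ interior. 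The weight-preservation in step (3), while conceptually simple, also requires attention to the interaction of the three statistics $\pos$, $\posrows$, and $\nonzero$ with the added positive singleton row, though this should reduce to the same bookkeeping already implicit in Lemma \ref{lemma:tesler-expression}.
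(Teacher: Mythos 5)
Your proposal is correct and follows essentially the same route as the paper: both arguments observe that the first row of a matrix in $\tesset{(1,\hooks)}$ must be a single entry equal to $1$, split into cases according to its column, delete the first row to land in $\tesset{\hooks}$ or in $\tesset{(\hooks_1,\ldots,\hooks_i+1,\ldots,\hooks_n)}$, and note that the deletion preserves the weight. Your write-up is somewhat more explicit about the hook-sum reindexing and the $\pos$, $\posrows$, $\nonzero$ bookkeeping than the paper's brief proof, but the underlying bijection is identical.
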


\begin{proof}
Consider a Tesler matrix $U$  with hook sums $(1, \hooks)$. Its first row must consist of a single nonzero entry, which must be equal to 1. Say this entry occurs in column $j$, i.e.\ $U_{1,j} = 1$. If $j=1$, removing the first row of $U$ produces a Tesler matrix  with hook sums $\hooks$, and this process produces a new matrix with hook sums $(\hooks_1, \ldots, \hooks_n)$. If $j>1$, we produce a Tesler matrix with hook sums $(\hooks_1, \ldots, \hooks_{j-2},\hooks_{j-1}+1,\hooks_{j}, \ldots, \hooks_n)$. Finally, we note that removing the first row does not change the weight of such a Tesler matrix.
\end{proof}

\begin{proof}[Proof of Corollary \ref{cor:e1}]
By (\ref{delta-prime-e}), the left-hand side of the statement involving $e_1$ in Corollary \ref{cor:e1} is equal to 
\begin{align}
\label{tes1}
\tes{(1,0^{n-1})}{q}{t}  + \sum_{i=0}^{n-2} \tes{(1,0^{i}, 1,0^{n-i-2})}{q}{t} .
\end{align}
In order to simplify this expression, we use Lemma \ref{lemma:remove-1} along with the fact that $\tes{\hooks}{q}{t} = 0$ if $\hooks_1 = 0$. As a result, (\ref{tes1}) equals
\begin{align}
&= \tes{(1,0^{n-1})}{q}{t} + \sum_{i=0}^{n-2} \tes{(1,1,0^i)}{q}{t} \\
&= \hilb \Delta_{e_1} e_{n-1} + \tes{(1,1,0^{n-2})}{q}{t} .
\end{align}
Applying Lemma \ref{lemma:remove-1} again, we get
\begin{align}
\tes{(1,1,0^{n-2})}{q}{t} =& \tes{(2,0^{n-2})}{q}{t} + \tes{(1,0^{n-2})}{q}{t} \\
\nonumber
&+ \sum_{i=0}^{n-3} \tes{(1,0^i, 1,0^{n-i-3})}{q}{t} \\
=& \tes{(2,0^{n-2})}{q}{t} + \hilb\Delta_{e_1} e_{n-1}  .
\end{align}
Therefore
\begin{align}
\label{e1-recursion}
\hilb\Delta_{e_1} e_{n} &= 2\hilb\Delta_{e_1} e_{n-1}  + \tes{(2,0^{n-2})}{q}{t} .
\end{align}

We claim that 
\begin{align}
\label{2-zeros}
\tes{(2,0^k)}{q}{t} &= \sum_{i=1}^{k+2} \left( \binom{k+1}{i-1} - \binom{k+1}{i} \right) \qtint{i} .
\end{align}
If we can prove this, induction on (\ref{e1-recursion}) implies
\begin{align}
\hilb \Delta_{e_1} e_n &= 2 \sum_{k=1}^{n-1} \binom{n-1}{k} \qtint{k}  + \sum_{k=1}^{n} \left( \binom{n-1}{k-1} - \binom{n-1}{k} \right) \qtint{k} \\
&= \sum_{k=1}^{n} \left(2 \binom{n-1}{k} + \binom{n-1}{k-1} - \binom{n-1}{k} \right) \qtint{k}  \\
&= \sum_{k=1}^{n} \binom{n}{k} \qtint{k} 
\end{align}
concluding the proof. We consider what happens when we remove the first row of a Tesler matrix with hook sums $(2,0^{n-2})$. The only way such a matrix can avoid having a zero row is if its first row contains a 2 in column 2 or a 1 in column 2 and a 1 in some other column. By removing the first row and using induction, we get
\begin{align}
\tes{(2,0^k)}{q}{t} &= \qtint{2} \tes{(2,0^{k-1})}{q}{t} - M \hilb \Delta_{e_1} e_k .
\end{align}
We can use induction to write this as 
\begin{align}
\tes{(2,0^k)}{q}{t} &= \qtint{2} \sum_{i=1}^{k+1} \left( \binom{k}{i-1} - \binom{k}{i} \right) \qtint{i} - M \sum_{i=1}^{k} \binom{k}{i} \qtint{i} \\
&= \sum_{i=1}^{k+1} \left( (q+t) \left(\binom{k}{i-1} - \binom{k}{i}\right) - (1-q)(1-t) \binom{k}{i} \right) \qtint{i} \\
&= \sum_{i=1}^{k+1} \left( (q+t) \binom{k}{i-1} - (1+qt) \binom{k}{i} \right) \qtint{i} .
\end{align}
Now all that remains to show is that, for any $a, b \geq 0$, the coefficient of $q^a t^b$ in the previous statement is $\binom{k+1}{a+b} - \binom{k+1}{a+b+1}$. 
This coefficient equals
\begin{align}
-&\binom{k}{a+b+1} + 2\binom{k}{a+b-1} - \binom{k}{a+b-1} \\
=& \binom{k}{a+b-1} - \binom{k}{a+b+1} \\
=& \left(\binom{k}{a+b-1} + \binom{k}{a+b}\right) - \left(\binom{k}{a+b} + \binom{k}{a+b+1} \right) \\
=& \binom{k+1}{a+b} - \binom{k+1}{a+b+1} .
\end{align}
We omit the proof of the second statement in the corollary, as it follows directly from the argument above and Theorem \ref{thm:tes-hilb}.
\end{proof}

To prove Corollary \ref{cor:m-1}, we will need another lemma about Tesler functions.

\begin{lemma}
\label{lemma:negative-hooks}
Given $\hooks \in \mathbb{Z}^{n}$, let $-\hooks = (-\hooks_1, \ldots, -\hooks_n)$. Then
\begin{align*}
\tes{-\hooks}{q}{t} &= \left(-\frac{1}{qt} \right)^n \tes{\hooks}{1/q}{1/t}.
\end{align*}
\end{lemma}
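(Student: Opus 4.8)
The plan is to prove Lemma~\ref{lemma:negative-hooks} by establishing a sign-and-overline relationship directly at the level of the weight function $\wt(U;q,t)$, and then observing that the sets of Tesler matrices with hook sums $\hooks$ and $-\hooks$ are in an obvious bijection via entrywise negation. Given any $U \in \tesset{\hooks}$, let $-U$ denote the matrix obtained by negating every entry. Since $\hooksvec$ is linear in the entries of $U$, we have $\hooksvec(-U) = -\hooksvec(U) = -\hooks$, so $U \mapsto -U$ is a bijection $\tesset{\hooks} \to \tesset{-\hooks}$. The entire lemma therefore reduces to the single local claim that
\begin{align*}
\wt(-U;q,t) &= \left(-\frac{1}{qt}\right)^n \overline{\wt(U;1/q,1/t)}
\end{align*}
summed over this bijection — more precisely, I must show that summing $\wt(-U;q,t)$ over $U \in \tesset{\hooks}$ gives $(-1/qt)^n \tes{\hooks}{1/q}{1/t}$. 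Note that $\tes{\hooks}{1/q}{1/t}$ is exactly $\sum_U \overline{\wt(U;q,t)}$ using the overline convention from Lemma~\ref{lemma:perp-d}, so it suffices to prove the termwise identity $\wt(-U;q,t) = (-1/qt)^n \overline{\wt(U;q,t)}$.

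First I would unwind the three factors of $\wt$ under negation. For the product of $q,t$-integers, negating an entry sends $\qtint{U_{i,j}}$ to $\qtint{-U_{i,j}}$, and the key elementary identity is $\qtint{-k} = \frac{q^{-k}-t^{-k}}{q-t} = -\frac{1}{qt}\,\overline{\qtint{k}}$, which I would verify by a one-line computation from the definition $\qtint{k} = (q^k-t^k)/(q-t)$. Since $U$ is essential there are exactly $\nonzero(U)$ nonzero entries, so the product over all nonzero entries contributes a factor of $(-1/qt)^{\nonzero(U)}$ together with the overline applied to the whole product. Next I would track the sign and the power of $M$. The factor $M^{\nonzero(U)-n}$ becomes $\overline{M}^{\nonzero(U)-n} = (M/qt)^{\nonzero(U)-n}$ under the overline, using $\overline{M} = M/qt$ from the excerpt; I must carefully collect the resulting powers of $qt$ against those from the $q,t$-integer factors.

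The genuinely delicate bookkeeping is the sign exponent $\pos(-U) - \posrows(-U)$ versus $\pos(U) - \posrows(U)$. Under negation, positive entries become negative and vice versa, so $\pos(-U) = \nonzero(U) - \pos(U)$, and a row that was all-positive becomes all-negative. Because every nonzero row of a signed matrix is either entirely positive or entirely negative, the number of all-positive rows of $-U$ equals the number of all-negative (i.e.\ not-all-positive but nonzero) rows of $U$; since $U$ is essential every one of the $n$ rows is nonzero, giving $\posrows(-U) = n - \posrows(U)$. The main obstacle, then, is to combine these two substitutions and confirm that $(-1)^{\pos(-U)-\posrows(-U)}$ times the collected power of $(-1/qt)$ from the two preceding paragraphs lands exactly on $(-1/qt)^n \overline{\wt(U;q,t)}$ with no stray sign or $qt$-power; I expect the $n$ nonzero rows to be precisely what supplies the global $(-1/qt)^n$. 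Once the termwise identity is checked, summing over the bijection $U \mapsto -U$ yields the lemma immediately.
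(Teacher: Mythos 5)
Your proposal is correct and follows essentially the same route as the paper: the entrywise negation bijection $U \mapsto -U$, the identity $\qtint{-k} = -\tfrac{1}{qt}\overline{\qtint{k}}$, and the termwise weight comparison using $\pos(-U) = \nonzero(U) - \pos(U)$ and $\posrows(-U) = n - \posrows(U)$ are exactly the steps in the paper's proof, and the sign and $qt$-power bookkeeping you outline does close up to give the global factor $\left(-\tfrac{1}{qt}\right)^n$.
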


\begin{proof}
Given $U \in \tesset{\hooks}$, consider the matrix $-U$. Clearly $-U \in \tesset{-\hooks}$. Furthermore, $\wt(-U;q,t)$
\begin{align}
=&  (-1)^{\pos(-U) - \posrows(-U)}  M^{\nonzero(-U)-n} \prod_{-U_{i,j} \neq 0} \qtint{-U_{i,j}} \\
\label{eq-neg}
=& (-1)^{\neg(U) - \negrows(U)} (qt\overline{M})^{\nonzero(U)-n} \\
\nonumber
&\times \prod_{U_{i,j} \neq 0} \left(-\frac{1}{qt} \right) [U_{i,j}]_{1/q,1/t}.
\end{align}
We check that
\begin{align}
(-1)^{\neg(U) - \negrows(U) + \nonzero(U)} &= (-1)^{\pos(U) - \posrows(U) + n} \\
qt^{\nonzero(U) - n - \nonzero(U)} &= qt^{-n}
\end{align}
thus \ref{eq-neg} equals $(-qt)^{-n} \tes{\hooks}{1/q}{1/t}$.
\end{proof}

\begin{proof}[Proof of Corollary \ref{cor:m-1}]
We write
\begin{align}
\Delta_{m_{-1}} e_n &= \tes{(1,0^{n-1})}{q}{t} + \sum_{i=0}^{n-2} \tes{(1,0^i,-1,0^{n-i-2})}{q}{t} \\
&= \tes{(1,0^{n-1})}{q}{t} + \sum_{i=0}^{n-2} \tes{(1,-1,0^i)}{q}{t} \\
&= \Delta_{m_{-1}} e_{n-1} + \tes{(1,-1,0^{n-2})}{q}{t}
\end{align}
by induction. It is enough to show that
\begin{align}
\tes{(1,-1,0^k)}{q}{t} = -\frac{1}{qt} \left(1-\frac{1}{qt}\right)^k
\end{align}
for $k \leq n-2$. To accomplish this, we use Lemmas \ref{lemma:remove-1} and \ref{lemma:negative-hooks} to write
\begin{align}
\tes{(1,-1,0^k)}{q}{t} =& \tes{(-1,0^k)}{q}{t} + \sum_{i=0}^{k-1} \tes{(-1,0^i, 1, 0^{k-i-1})}{q}{t}\\
=& \left(-\frac{1}{qt}\right)^{k+1} \tes{(1,0^k)}{1/q}{1/t} \\
\nonumber
&+ \left(-\frac{1}{qt}\right)^{k+1} \sum_{i=0}^{k-1} \tes{(1,0^i,-1,0^{k-i-1})}{1/q}{1/t} \\
=& \left(-\frac{1}{qt}\right)^{k+1} \overline{\left( \hilb \Delta_{m_{-1}} e_{k+1}  \right)} \\
=& \left(-\frac{1}{qt}\right)^{k+1} (1-qt)^k \\ 
=& \left(-\frac{1}{qt}\right) (1-1/qt)^k 
\end{align}
by induction on $k$.
\end{proof}

\section{Combinatorics at $t=0$}
\label{sec:t=0}
In this subsection, we show how to relate $\left. \hilb \Delta^{\prime}_{e_k} e_n \right|_{t=0}$ to the distribution of an inversion statistic on ordered set partitions. This verifies one part of the Rise Version of the Delta Conjecture \cite{delta-conjecture} in which we take an inner product with $p_{1^n}$ and set $t=0$ or $q=0$.

We define $\osp{n}{k}$ to be the collection of ordered partitions of $\{1,2,\ldots,n\}$ into exactly $k$ blocks. Given a subset $S$ of $ \{1,2,\ldots,n\}$, we set $\osp{n}{S}$ to be the ordered set partitions in which the minimal elements of the blocks are exactly the elements of $S$. For example, $7|236|45|1$ is an element of $\osp{7}{\{1,2,4,7\}}$ and $\osp{7}{4}$. For any $\hooks \in \{0,1\}^{n}$, we write 
\begin{align}
\set(\hooks) = \{i : \hooks_{i} = 1 \}.
\end{align}
Given an ordered set partition $\pi$, we define a statistic $\inv(\pi)$ that counts the number of pairs of entries $(a,b)$ such that $a>b$, $a$ appears in a block strictly to the left of $b$'s block, and $b$ is minimal in its block. For example, $\inv(5|24|13) = 4$.

\begin{cor}
\label{cor:t=0}
\begin{align*}
\tes{\hooks}{q}{0}  &= \sum_{\pi \in \osp{n}{\set(\hooks)}} q^{\inv(\pi)} = \sum_{j=1}^{n} \qint{\hooks_1+ \ldots + \hooks_j}
\end{align*}
where the last equality uses \cite{rw}. Furthermore, summing over all $\hooks \in \{0,1\}^n$ with $|\hooks| = k+1 \geq 1$ yields
\begin{align*}
\left. \hilb \Delta^{\prime}_{e_{k}} e_n \right|_{t=0} &= \qint{k+1}! \qstir{n}{k+1}{q}. 
\end{align*}
where $\qstir{n}{k}{q}$ is defined by the recursion
\begin{align*}
\qstir{n}{k}{q} = \qstir{n-1}{k-1}{q} + \qint{k} \qstir{n-1}{k}{q} 
\end{align*}
with initial conditions $\qstir{0}{0}{q} = 1$ and $\qstir{n}{k}{q}= 0$ if $k < 0$ or $n < k$.
\end{cor}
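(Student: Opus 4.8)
The plan is to prove the two parts of Corollary \ref{cor:t=0} in sequence, establishing first the formula for a fixed $\hooks \in \{0,1\}^n$ and then summing over all such $\hooks$ with $|\hooks| = k+1$ to obtain the delta operator statement. For the first part, I would set $t=0$ in Theorem \ref{thm:tes-hilb} (or more conveniently Corollary \ref{cor:tes-hilb1}) and analyze which Tesler matrices survive the specialization. The key observation is that at $t=0$ the weight simplifies dramatically: since $\qtint{k}|_{t=0} = q^{k-1}$ for $k \geq 1$ and the factor $M = (1-q)(1-t)$ becomes $1-q$, many cancellations occur. I expect that the surviving contributions correspond exactly to a class of Tesler matrices that can be placed in bijection with ordered set partitions in $\osp{n}{\set(\hooks)}$, with the $q$-power tracking the $\inv$ statistic. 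The author mentions extending an involution of Levande \cite{levande} from the $\hooks = 1^n$ case, so the natural route is to construct a sign-reversing involution on the Tesler matrices whose weights do not specialize to a clean monomial, leaving only the fixed points, which are then identified with ordered set partitions.

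For the combinatorial identification, I would exhibit an explicit bijection between the fixed-point Tesler matrices and $\osp{n}{\set(\hooks)}$ under which $\inv(\pi)$ matches the exponent of $q$ appearing in the surviving weight. The condition $\set(\hooks) = \{i : \hooks_i = 1\}$ should correspond to constraining which positions serve as block minima, matching the definition $\osp{n}{S}$. The second equality $\sum_{\pi} q^{\inv(\pi)} = \sum_{j=1}^n \qint{\hooks_1 + \ldots + \hooks_j}$ I would simply cite from \cite{rw}, as the author explicitly does, rather than reprove it; alternatively one can verify it by a short direct computation building the ordered set partition one element at a time and tracking how many inversions each insertion can create.

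For the second part, I would invoke the $t=0$ specialization of (\ref{delta-hilb-e}) in Corollary \ref{cor:delta-hilb}. By (\ref{delta-prime-e}) we have $\Delta_{e_k} = \Delta'_{e_k} + \Delta'_{e_{k-1}}$, and since $e_k = m_{1^k}$, the relevant case of Corollary \ref{cor:delta-hilb} gives $\hilb \Delta'_{e_k} e_n = \sum_{\hooks : \sort(\hooks) = 1^k} \tes{(1,\hooks)}{q}{t}$. The vectors $(1, \hooks)$ with $\sort(\hooks) = 1^k$ are precisely the $\hooks' \in \{0,1\}^n$ with first entry $1$ and $|\hooks'| = k+1$; since $\tes{\hooks'}{q}{t} = 0$ whenever the first entry is $0$ (its first row would be forced to be zero), summing the first-part formula over all $\hooks' \in \{0,1\}^n$ with $|\hooks'| = k+1$ is the same as summing over those beginning with $1$. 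Setting $t=0$ then yields $\sum_{\hooks'} \sum_{\pi \in \osp{n}{\set(\hooks')}} q^{\inv(\pi)} = \sum_{\pi \in \osp{n}{k+1}} q^{\inv(\pi)}$, since every ordered set partition into $k+1$ blocks has a unique set of block minima. The final step is to show this equals $\qint{k+1}! \, \qstir{n}{k+1}{q}$; I would do this by verifying the stated recursion for $S_{n,k}(q)$ is satisfied by the generating function $\frac{1}{\qint{k}!}\sum_{\pi \in \osp{n}{k}} q^{\inv(\pi)}$, peeling off the block containing $n$ (or the element $n$) and using a standard insertion argument on $\inv$.

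The main obstacle I anticipate is the first part: constructing the sign-reversing involution at $t=0$ and correctly matching the surviving $q$-weights to the $\inv$ statistic on ordered set partitions. The bookkeeping of signs coming from $(-1)^{\pos(U) - \posrows(U)}$ together with the powers of $M$ and the behavior of $\qtint{k}$ at $t=0$ for both positive and (after the involution) negative entries is delicate, and getting the involution to pair up exactly the non-surviving terms while preserving all fixed points requires care. Once that bijection is in hand, the second part is largely formal, following from Corollary \ref{cor:delta-hilb} and a routine recursion check.
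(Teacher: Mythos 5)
Your roadmap coincides with the paper's: the middle equality is simply cited from \cite{rw}, and the passage to $\left.\hilb \Delta^{\prime}_{e_k} e_n\right|_{t=0}$ is exactly the formal argument you give, via \eqref{delta-hilb-e} with $\rho = 1^k$, the vanishing of $\tes{\hooks}{q}{t}$ when $\hooks_1 = 0$, and the disjoint decomposition of $\osp{n}{k+1}$ into the $\osp{n}{S}$ over the possible sets of block minima; that half of your proposal is complete and correct. The genuine gap is in the first equality, which is where all the content lies and where your proposal stops at a statement of intent. The paper's proof (Lemma \ref{lemma:L-beta}) constructs an explicit map $L_{\hooks}$ from $\tesset{\hooks}$ onto $\osp{n}{\set(\hooks)}$ by first passing through an intermediary array built from the diagonal entries and then the column entries of $U$, assigns local weights $q$, $1$, or $-1$ to cells of the array according to whether adjacent pairs have lower occurrences, and then runs a sign-reversing involution $\Phi_{\pi}$ on the weighted arrays within each fiber $L_{\hooks}^{-1}(\pi)$, whose unique fixed point carries weight exactly $q^{\inv(\pi)}$. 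Verifying that $\Phi_{\pi}$ is well defined (that inserting $d$ into $c$'s row with a $-1$ weight produces a valid weighted array, and that removing it leaves the remaining weights legal) is the delicate part, and none of it appears in your write-up. You correctly identify this as the main obstacle, but identifying an obstacle is not the same as clearing it.

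One observation that would have removed your stated worry about signs and negative entries: for $\hooks \in \{0,1\}^n$, every matrix in $\tesset{\hooks}$ automatically has all entries nonnegative (induct on rows; each row's sum equals $\hooks_i$ plus a nonnegative quantity, so a nonzero, entirely nonpositive row is impossible). Hence at $t=0$ the weight collapses to $(q-1)^{\nonzero(U)-n}\prod_{U_{i,j}\neq 0} q^{U_{i,j}-1}$, and the problematic specialization of $\qtint{k}$ at $t=0$ for $k<0$ never arises. The only signs to cancel are those coming from the power of $q-1$, which is precisely what the paper's involution handles.
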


In order to prove Corollary \ref{cor:t=0}, we first note that Levande defined a map $\tesset{1^n} \to \S_n$ \cite{levande}. We will denote this map by $L_n$. Furthermore, Levande used a weight-preserving, sign-reversing involution to prove
\begin{align}
\label{levande1}
\sum_{\substack{U \in \tesset{1^n} \\ L_n(U) = \sg}} \wt(U;q,0)  &= q^{\inv(\sg)}.
\end{align}
for any $\sg \in \S_n$. Summing \eqref{levande1} over all permutations $\sg \in \S_n$ yields
\begin{align}
\tes{1^n}{q}{0} &= \qint{n}!.
\end{align}

We extend Levande's results to our setting as follows. For any $\hooks \in \{0,1\}^{n}$, we define a map $L_{\hooks}: \tesset{\hooks} \to \osp{n}{\set(\hooks)}$. To define $L_{\hooks}$, we first map a Tesler matrix $U$ with hook sums $\hooks$ to an intermediary array. This array is created as follows:
\begin{enumerate}
\item First, read the entries of the diagonal $U_{j,j}$ for $j = n$ to 1. If $U_{j,j}> 0$, record a $j$ in the rightmost column of the array $U_{j,j}$ times, recording from top to bottom. After this step, the array will have a single column of length $|\hooks|$ which weakly decreases from top to bottom.
\item For every $j = n$ to $1$, read up the $j$th column from $U_{j-1,j}$ to $U_{1,j}$. For every $U_{i,j} >0$, find the highest $j$ in the array that currently has no entries to its left. Place an $i$ to its left. Place an $i$ in this manner $U_{i,j}$ times.
\end{enumerate}
For example, we send
\begin{displaymath}
\left[ \begin{array}{cccc}
0 & 0 & 0 & 1 \\
0 & 0 & 1 & 0 \\
0 & 0 & 1 & 0 \\
0 & 0 & 0 & 2 \end{array} \right] \mapsto
\left( \begin{array}{cc}
1 & 4  \\
\phantom{1} & 4  \\
2 & 3 \end{array} \right) .
\end{displaymath}
Given such an array, we produce an ordered set partition by the following process.
\begin{enumerate}
\item Read the leftmost entries in each row of the array from bottom to top. Make these the minimal elements in $k$ different blocks, from left to right.
\item For each $i = 1$ to $n$ which is not yet placed into the ordered set partition, find the lowest row in the array in which it appears. Place it in the block which contains the leftmost entry of that row.
\end{enumerate}
Continuing our example, we obtain the ordered set partition $23|4|1$.

\begin{lemma}
\label{lemma:L-beta}
For any $\hooks \in \{0,1\}^{n}$, $L_{\hooks}$ is well-defined. Furthermore, for any $\pi \in \osp{n}{\set(\hooks)}$, 
\begin{align}
 \sum_{\substack{U \in \tesset{\hooks} \\ L_{\hooks}(U) = \pi}} \wt(U; q, 0) &= q^{\inv(\pi)}  .
\end{align}
\end{lemma}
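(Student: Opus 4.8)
The plan is to prove the two assertions of Lemma \ref{lemma:L-beta} in sequence: first that $L_{\hooks}$ is a well-defined map into $\osp{n}{\set(\hooks)}$, and then the weight identity. For well-definedness, I would trace through the two-stage construction (Tesler matrix to intermediary array, then array to ordered set partition) and verify that each step is unambiguous and lands in the correct set. The key observation is that condition (2) (no zero rows) guarantees that for every index $j$, either the diagonal entry $U_{j,j}$ is nonzero or some above-diagonal entry in column $j$ is nonzero, so that every $j \in \{1, \ldots, n\}$ eventually gets placed into the array. The hook-sum condition $\hooksvec(U) = \hooks$ with $\hooks \in \{0,1\}^n$ forces each hook sum to be $0$ or $1$, which I would use to show that the leftmost entries of the array rows are exactly the elements of $\set(\hooks)$, so the resulting ordered set partition indeed lies in $\osp{n}{\set(\hooks)}$.

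Next I would set up the computation of the fiber sum. Since $t=0$, the weight simplifies considerably: $\qtint{k}|_{t=0} = q^{k-1}$ for $k > 0$, and $M|_{t=0} = (1-q)$. For $\hooks \in \{0,1\}^n$ every entry of a Tesler matrix $U \in \tesset{\hooks}$ is nonnegative (a nonpositive row would force a nonpositive hook sum, contradicting $\hooks_i \in \{0,1\}$ once one accounts for the essential condition), so $\pos(U) = \posrows(U) = \nonzero(U)$ for the relevant rows and the sign factor is trivial. This reduces $\wt(U;q,0)$ to a monomial in $q$ times a power of $(1-q)$, where the power of $(1-q)$ is $\nonzero(U) - n$.

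The heart of the argument is the weight-preserving, sign-reversing involution, which I would build by directly generalizing Levande's involution from \cite{levande}. The idea is to fix the target ordered set partition $\pi$ and pair up, within the fiber $L_{\hooks}^{-1}(\pi)$, those matrices $U$ that contribute a factor of $(1-q)$ coming from some $M = (1-q)$ with a cancelling pair: the involution toggles a specific nonzero off-diagonal entry, trading a surviving $-q$-type term against a $+1$-type term so that all contributions cancel except a single fixed point whose weight is exactly $q^{\inv(\pi)}$. I would need to identify precisely which entry the involution acts on — following Levande, this should be the entry that is ``furthest'' in some reading order and not yet forced by the combinatorics of $\pi$ — and verify it is an involution preserving the fiber and reversing the relevant sign while preserving the $q$-power. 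The main obstacle is checking that this involution is compatible with the more complicated fiber structure here: in Levande's $\hooks = 1^n$ case all hook sums are $1$, whereas allowing $\hooks_i = 0$ changes which rows can be empty of diagonal entries and which columns carry the ``decreasing'' structure used to define $L_{\hooks}$. I expect the bulk of the work to be verifying that the involution respects Step 2 of the array construction and Step 2 of the ordered-set-partition construction, so that $L_{\hooks}$ is genuinely constant on involution-orbits, and that the unique fixed point has $\inv(\pi)$ inversions. The final equality to $\sum_{j} \qint{\hooks_1 + \cdots + \hooks_j}$ in Corollary \ref{cor:t=0} then follows from \cite{rw} and is not part of this lemma.
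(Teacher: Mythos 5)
Your overall strategy is the same as the paper's: establish well-definedness of $L_{\hooks}$ from the nonnegativity of the hook sums, reduce the $t=0$ weight to $(q-1)^{\nonzero(U)-n}\prod_{U_{i,j}\neq 0} q^{U_{i,j}-1}$, and then cancel everything but a single term per fiber via a sign-reversing involution extending Levande's. The well-definedness portion and the weight reduction are essentially right (one small slip: the sign $(-1)^{\pos(U)-\posrows(U)}$ is not trivial but equals $(-1)^{\nonzero(U)-n}$ when all entries are nonnegative, and it is absorbed into converting $(1-q)^{\nonzero(U)-n}$ to $(q-1)^{\nonzero(U)-n}$; the paper states this combined form directly).

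However, the heart of the lemma is the involution, and your proposal defers exactly that: you say you ``would need to identify precisely which entry the involution acts on'' and that ``the bulk of the work'' is the compatibility check, but you do not carry either out. This is a genuine gap, not a routine verification. In the paper the involution does not act on Tesler matrices paired within a fiber, as your description suggests; it acts on \emph{weighted arrays}, i.e.\ the intermediary arrays together with an assignment of $q$, $1$, or $-1$ to each entry that expands the factor $(q-1)^{\nonzero(U)-n}$ into monomials. A single matrix therefore contributes many signed terms, and the insertion/deletion move (inserting or removing an entry $d$ carrying weight $-1$ into the row of a carefully chosen leading entry $c$) changes the underlying matrix while preserving the image $\pi$. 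The delicate points — that the inserted $d$ can legitimately carry weight $-1$ (no lower occurrence of the new adjacent pair), that deletion leaves the neighbor's weight valid, that $c'$ cannot itself carry a $-1$ by minimality of $d$, and that the unique fixed point's weight is $q^{\inv(\pi)}$ — are precisely where the $\hooks_i=0$ rows could cause trouble, and none of them are addressed. Without a concrete definition of the involution and these verifications, the identity $\sum_{U} \wt(U;q,0) = q^{\inv(\pi)}$ is not established.
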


Once we have proved Lemma \ref{lemma:L-beta}, we simply sum over all $\pi \in \osp{n}{\set{\hooks}}$ to prove Corollary \ref{cor:t=0}.

\begin{proof}[Proof of Lemma \ref{lemma:L-beta}]
First, we argue that an array can always be created from $U$ in the manner described above. Consider the $j$th column of $U$. At this point in the process of creating the array associated with $U$, we have processed all entries of the form $U_{j,k}$ for any $k \geq j$. Since the $j$th hook sum of $U$ is nonnegative, there are enough $j$'s available for us to place $U_{i,j}$ $i$'s to the left of a $j$ for each $i$.  

Next, we show that we can always create an ordered set partition $\pi \in \osp{n}{\set(\hooks)}$ from such an array. The number of leftmost elements $j$ in the rows of the array is equal to the $j$th hook sum of $U$. Since $\hooks \in \{0,1\}^{n}$, the minimal elements of $\pi$ are unique and they are indeed equal to $\set(\hooks)$. This proves $\pi \in \osp{n}{\set(\hooks)}$. 

Now that $L_{\hooks}$ is well-defined, we wish to create an involution that concludes the proof of the lemma. Our proof is quite similar to the proof for the $\hooks = 1^{n}$ case in \cite{levande}. We begin by noting that, at $t=0$, the weight of $U$ is
\begin{align}
(q-1)^{\entries(U) - \rows(U)} \prod_{U_{i,j} \neq 0} q^{U_{i,j}-1}
\end{align}
since $U$ may not have any negative entries. We will assign a weight to the associated array in a way that corresponds to this weight. This weight will take the form of an array of the same shape as the associated array, except it will be filled with entries $q$, $1$, or $-1$. For every entry $a$ in the array, let $b$ be the entry directly to its right. (If $a$ is in the rightmost column of the array, set $b=n+1$.) If this is not the lowest appearance of the adjacent pair $ab$ in the array, we assign a weight of $q$. If this is the lowest appearance of $ab$ but it is not the lowest appearance of $a$, we assign a weight of $q$ or $-1$. Otherwise, we assign the weight 1. Then we define the weight of the array to be the product of these individual weights. For example, the only way to assign weights to the above array is 
\begin{align*}
\left( \begin{array}{cc}
1 & 4  \\
\phantom{1} & 4  \\
2 & 3 \end{array} \right)  \mapsto 
\left( \begin{array}{cc}
1 & q  \\
\phantom{1} & 1  \\
1 & 1 \end{array} \right) 
\end{align*}
where the weights are in parentheses.  The total weight of this assignment is $q$.

Now, we wish to define an involution $\Phi_{\pi}$ on these weighted arrays. Let $c$ be the highest leading (i.e.\ leftmost in its row) entry such that there exists a $d>c$ with the property that either
\begin{itemize}
\item $d$ appears below $c$'s row but does not appear in $c$'s row, or
\item $d$ appears in $c$'s row and $d$ has a weight of $-1$.
\end{itemize}
Choose $d$ to be the smallest (and therefore leftmost) entry in $c$'s row that satisfies one of these conditions. In the first case, $\Phi_{\pi}$ inserts a $d$ into $c$'s row along with a $-1$ weight. In the second case, $\Phi_{\pi}$ removes the $d$ from $c$'s row along with its $-1$ weight. If no such $c$ and $d$ exist, $\Phi_{\pi}$ leaves the array as a fixed point. 

To prove that $\Phi_{\pi}$ is an involution, first consider the case where $d$ appears below $c$'s row but does not appear in $c$'s row. Set $c^{\prime}$ (respectively $d^{\prime}$) to be the largest (resp.\ smallest) element in $c$'s row that is less than (resp.\ greater than) $d$; in other words, if $d$ were in $c$'s row $c^{\prime}$ and $d^{\prime}$ would be its left and right neighbors, respectively. ($d^{\prime}$ may be empty, in which case we consider it to be $n+1$.) We need to argue that $d$ can be inserted between $c^{\prime}$ and $d^{\prime}$ with a weight of -1, which can only happen if the lowest appearance of the successive pair $d d^{\prime}$ is in $c$'s row. Say that there is some lower occurrence of $d d^{\prime}$. This must occur below $c$'s row, which contains the adjacent pair $c^{\prime} d^{\prime}$. This cannot happen, by the way in which we create these arrays. By a similar argument, the resulting array is valid, i.e.\ the new adjacent pairs $c^{\prime}d$ and $d d^{\prime}$ obey the defining property of our matrices: if $ai$ and $bi$ are pairs with $ai$ occurring above $bi$, then $a > b$. 

Now assume that $d$ appears in $c$'s row and $d$ has a weight of $-1$. Removing $d$ creates the adjacent pair $c^{\prime} d^{\prime}$. We need to show that any weight that had been assigned to $c^{\prime}$ is still valid now that its right neighbor is $d^{\prime}$ instead of $d$. If $c^{\prime}$ had been assigned a weight of $q$, then there must have been a lower occurrence of $c^{\prime}$, which must still exist. If $c^{\prime}$ had been assigned a 1, then we must be considering the lowest appearance of $c^{\prime}$, and removing $d$ does not alter this. Finally, $c^{\prime}$ cannot be weighted with a $-1$ by the minimality of $d$. 

It is clear that $\Phi_{\pi}$ is an involution that reverses signs of its non-fixed points. It only remains to investigate the fixed points of $\Phi_{\pi}$. In such a fixed point, for every $a$ that is the leftmost entry in a row of the array and $b>a$, $b$ must appear in $a$'s row. Furthermore, the weight associated with $b$ must either be $q$ or 1 (which can only occur if the entry immediately to the left of $b$ contains the lowest appearance of that element). Fixed points also contain no $-1$ weights. From this, we can see that each $\pi$ has a unique fixed point. It is the array created by the following process:
\begin{enumerate}
\item Write the blocks of $\pi$ as rows in the array, from left to right in $\pi$ and bottom to top in the array.
\item For each entry in the array $b$ and each leftmost element $a$ that appears above $b$, add $b$ to $a$'s row if $a<b$. 
\item Add a weight of $q$ at all possible positions.
\end{enumerate}
For example, the fixed point associated with $\pi = 23|4|1$ is 
\begin{align*}
\left( \begin{array}{cccc}
1 & 2 & 3 & 4 \\
& & & 4 \\
& & 2 & 3 \end{array} \right) 
\ \ \ 
\left( \begin{array}{cccc}
1 & 1 & 1 & 1 \\
& & & 1 \\
&  & q & q \end{array} \right) .
\end{align*}
The weight of this fixed point is equal to the number of minimal elements to the right of $b$ in $\pi$ that are less than $b$ for each $b$. This is exactly the $\inv$ statistic on ordered set partitions.
\end{proof}

\section{Combinatorics at $t=1$}
\label{sec:t=1}

Recall that the weight of an $n \times n$ Tesler matrix $U$ is given by
\begin{align*}
\wt(U;q,t) &= (-1)^{\pos(U) - \posrows(U)} M^{\nonzero(U) - n} \prod_{U_{i,j} \neq 0} \qtint{U_{i,j}} 
\end{align*}
where $M = (1-q)(1-t)$. At $t=1$, it follows that this weight is zero unless $\nonzero(U) = n$, which occurs if and only if every row contains exactly one nonzero entry. 

Following the vocabulary of \cite{tesler-lots}, we say that a Tesler matrix $U \in \tesset{\hooks}$ is \emph{permutational} if it has exactly one nonzero entry in each row, and we denote the set of these matrices by $\ptesset{\hooks}$. Then
\begin{align}
\tes{\hooks}{q}{1} &= \sum_{U \in \ptesset{\hooks}} \wt(U; q,1) .
\end{align}
In general, the collection of permutational Tesler matrices is much easier to manipulate than the set of all Tesler matrices. In particular, we will see that there is a bijection between permutational Tesler matrices and a certain class of ordered set partitions. This bijection will allow us to give several nice formulas for $\tes{\hooks}{q}{1}$ and $\tes{\hooks}{1}{1}$.

\subsection{Permutational Tesler matrices}
\label{ssec:ptes}

For $\hooks \in \mathbb{Z}^n$, we define $\set(\hooks) = \{i : \hooks_i \neq 0\}$. Recall that, for any set $S$, $\osp{n}{S}$ is the set of ordered set partitions $\pi$ such that the set of minimal entries in the blocks of $\pi$ is equal to $S$. If we number the blocks of $\pi$ as $\pi_1, \pi_2, \ldots$ from left to right, it will be helpful to use the notation $\bl_{i}(\pi)$ to mean the number of the block containing $i$. 

Given $\hooks \in \mathbb{N}^n$, we will define a bijection $\psi_{\hooks} : \osp{n}{\set(\hooks)} \to \ptesset{\hooks}$. We will then extend this idea to any $\hooks \in \mathbb{Z}^n$. In order to define this bijection, we first define a vector $\target(\pi)$ by $\target_i(\pi) = j$ where $j > i$, $\bl_j(\pi) \geq \bl_i(\pi)$,  and $\bl_j(\pi)$ is minimal among all such choices of $j$; if no such $j$ exist, then $\target_i(\pi) = i$. In words, we begin at $i$ in $\pi$ (written with its blocks in increasing order) and move to the right; $j$ is the first element we encounter that is greater than $i$. If we never encounter a larger element, $i$ is its own target. For example, if $\pi = 3|12|4$, then $\target(\pi) = (2,4,4,4)$.

Given $\hooks \in \mathbb{N}^n$ and $\pi \in \osp{n}{\set(\hooks)}$ we define a second vector $\tail(\hooks, \pi)$ by
\begin{align}
\tail_{i}(\hooks, \pi) &= \sum_{r=m_i(\pi)}^{\bl_{i}(\pi)} \hooks_{\min  \pi_r  }
\end{align}
where $m_i(\pi)$ is minimal such that $m_i(\pi) \leq \bl_i(\pi)$ and $\pi_{m_i(\pi)-1}$ contains an entry larger than $i$. (If no block to the left of $\pi_{\bl_i(\pi)}$ contains an entry larger than $i$, then $m_i(\pi)=1$.) In words, we begin at $i$ in $\pi$ and scan to the left until we find an entry larger than $i$. The block to the right of the block containing this larger entry is block $m_i(\pi)$. Then we sum the entries of $\hooks$ indexed by the minimal elements of the blocks $\pi_{m_i(\pi)}, \pi_{m_i(\pi)+1}, \ldots, \pi_{\bl_i(\pi)}$. For example, if $\hooks = (2,0,3,1)$ and $\pi = 3|12|4$, 
\begin{align}
\tail_1(\hooks, \pi) &= \hooks_1 = 2 \\
\tail_2(\hooks, \pi) &= \hooks_1 = 2 \\
\tail_3(\hooks, \pi) &= \hooks_3 = 3 \\
\tail_4(\hooks, \pi) &= \hooks_3 + \hooks_1 + \hooks_4 = 3 + 2 + 1 = 6.
\end{align}

Now we are ready to define a bijection
\begin{align}
\psi_{\hooks} : \osp{n}{\set(\hooks)} \to \ptesset{\hooks}
\end{align}
given $\hooks \in \mathbb{N}^n$. Now that we have defined the target and tail vectors, the bijection is not difficult to state. Given $\pi \in \osp{n}{\set(\hooks)}$, we form a permutational Tesler matrix by placing $\tail_i(\hooks, \pi)$ in row $i$ and column $\target_i(\pi)$. All other entries are zero. Continuing the example above,
\begin{align}
\psi_{(2,0,3,1)}(3|12|4) &= \left[ \begin{array}{cccc}
0 & 2 & 0 & 0 \\
0 & 0 & 0 & 2 \\
0 & 0 & 0 & 3 \\
0 & 0 & 0 & 6
 \end{array} \right] .
\end{align} 

\begin{prop}
\label{prop:ptes-bijection}
The map $\psi_{\hooks}$ is a bijection from $\osp{n}{\set(\hooks)}$ to $\ptesset{\hooks}$ for $\hooks \in \mathbb{N}^n$. Furthemore, for any $\pi \in \osp{n}{\set(\hooks)}$ we have 
\begin{align*}
\wt(\psi_{\hooks}(\pi); q, 1) = \prod_{i=1}^{n} \qint{\tail_i(\hooks, \pi)} .
\end{align*}
This implies
\begin{align*}
\tes{\hooks}{q}{1} &= \sum_{\pi \in \osp{n}{\set(\hooks)}} \prod_{i=1}^{n} \qint{\tail_i(\hooks, \pi)} .
\end{align*}
\end{prop}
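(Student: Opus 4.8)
The plan is to establish the three claims of the proposition in sequence: that $\psi_{\hooks}(\pi)$ really is a permutational Tesler matrix with hook sums $\hooks$, that $\psi_{\hooks}$ is a bijection, and that $\wt(\psi_{\hooks}(\pi);q,1)$ factors as stated. The final formula for $\tes{\hooks}{q}{1}$ is then immediate: it is the content of the displayed identity $\tes{\hooks}{q}{1} = \sum_{U \in \ptesset{\hooks}} \wt(U;q,1)$ recorded just above, transported along the bijection $\psi_{\hooks}$.

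That $U := \psi_{\hooks}(\pi)$ is upper triangular is immediate from $\target_i(\pi) \geq i$. Its only nonzero entry in row $i$ equals $\tail_i(\hooks,\pi)$, and I claim this is positive: the final summand of $\tail_i(\hooks,\pi)$ is $\hooks_{\min \pi_{\bl_i(\pi)}}$, and every block minimum of $\pi$ belongs to $\set(\hooks)$, so this term is a strictly positive coordinate of $\hooks \in \mathbb{N}^n$. Hence $U$ has exactly one positive entry in every row, no zero rows, and the sign condition (3) holds vacuously, so $U \in \ptesset{\hooks}$ once we know its hook sums are correct.

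Verifying the hook sums is the crux. Because the unique nonzero entry of row $i$ sits in column $\target_i(\pi) \geq i$ and the above-diagonal entries of column $i$ are precisely the entries $\tail_k(\hooks,\pi)$ contributed by the rows $k < i$ with $\target_k(\pi) = i$, the definition of $\hooksvec$ collapses to
\[
\hooksvec_i(U) = \tail_i(\hooks,\pi) - \sum_{\substack{k < i \\ \target_k(\pi) = i}} \tail_k(\hooks,\pi),
\]
and I must show the right-hand side equals $\hooks_i$. Writing $P_r = \sum_{s=1}^{r} \hooks_{\min \pi_s}$ for the cumulative $\hooks$-weight of the first $r$ block minima, each tail is a difference $\tail_j(\hooks,\pi) = P_{\bl_j(\pi)} - P_{L_j}$, where $P_{L_j}$ is read off from the block of the nearest element lying to the left of $j$ in the reading word of $\pi$ and exceeding $j$ (with $L_j = 0$ if there is none). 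The key combinatorial input is that, listing the elements $k < i$ with $\target_k(\pi) = i$ in reading order as $k^{(1)} > k^{(2)} > \cdots > k^{(m)}$, these are exactly the right-to-left maxima of the portion of the reading word lying between $i$'s nearest left-larger element and $i$ itself; from this one shows the nearest left-larger of $k^{(1)}$ equals that of $i$ (so $L_{k^{(1)}} = L_i$) and the nearest left-larger of $k^{(j+1)}$ is $k^{(j)}$ (so $L_{k^{(j+1)}} = \bl_{k^{(j)}}(\pi)$). The sum then telescopes to $P_{\bl_{k^{(m)}}(\pi)} - P_{L_i}$, giving $\hooksvec_i(U) = P_{\bl_i(\pi)} - P_{\bl_{k^{(m)}}(\pi)}$. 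A short case analysis finishes: if $i$ is not the minimum of its block then $k^{(m)}$ is the in-block predecessor of $i$ and $\bl_{k^{(m)}}(\pi) = \bl_i(\pi)$, yielding $0 = \hooks_i$; if $i$ is a block minimum then $k^{(m)}$ is the maximum of the preceding block and $\bl_{k^{(m)}}(\pi) = \bl_i(\pi) - 1$, yielding $P_{\bl_i(\pi)} - P_{\bl_i(\pi)-1} = \hooks_i$. The degenerate case where no $k < i$ targets $i$ occurs exactly when $i$ is a block minimum whose immediate left neighbor exceeds it, forcing $L_i = \bl_i(\pi)-1$ and hence $\tail_i(\hooks,\pi) = \hooks_i$ directly. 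Pinning down these left-larger positions — in particular using right-to-left maximality to rule out an intermediate element that would itself target $i$ — is where the real work lies.

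For the bijection I would exhibit an inverse. Given $U \in \ptesset{\hooks}$, the columns of the unique nonzero entries recover the function $i \mapsto \target_i$, which is the next-greater-element function of the reading word of $\pi$; this function determines the reading word (reconstructed from the largest value down to the smallest), after which cutting the word immediately before each value in $\set(\hooks)$ recovers the blocks of $\pi$. Checking that this recipe inverts $\psi_{\hooks}$ gives both injectivity and surjectivity. Finally, the weight: setting $t=1$ makes $M = (1-q)(1-t) = 0$, so $\wt(U;q,1)$ vanishes unless $\nonzero(U) = n$, i.e. $U$ is permutational; for such $U$ the factor $M^{\nonzero(U)-n}$ is $1$, every entry is positive so $\pos(U) - \posrows(U) = n - n = 0$ and the sign is $+1$, and $\qtint{k}|_{t=1} = \qint{k}$. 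Thus $\wt(\psi_{\hooks}(\pi);q,1) = \prod_{i=1}^{n} \qint{\tail_i(\hooks,\pi)}$, and summing over $\pi \in \osp{n}{\set(\hooks)}$ completes the proof.
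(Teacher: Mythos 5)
Your proposal is correct and follows essentially the same route as the paper: verify that $\psi_{\hooks}(\pi)$ lands in $\ptesset{\hooks}$ by checking upper-triangularity, positivity of the tails, and the hook sums via a telescoping of $\tail_{c}(\hooks,\pi)$ over the decreasing chain of elements targeting $i$ (your right-to-left-maxima phrasing is exactly the paper's $c_1 > c_2 > \cdots > c_k$ argument), then read off the weight at $t=1$. Your sketch of the explicit inverse via reconstructing the reading word from the next-greater-element function is a small addition the paper leaves implicit, but it does not change the substance of the argument.
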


\begin{proof}
The crux of the proof is verifying that $\psi_{\hooks}$ is a bijection. The final two statements follow directly from the definitions.

First, we need to verify that the range of $\psi_{\hooks}$ is actually contained in $\ptesset{\hooks}$. Given $\pi \in \osp{n}{\set(\hooks)}$, it is clear that $U = \psi_{\hooks}(\pi)$ has at most one nonzero entry per row, namely the value $U_{i,\target_i(\pi)} = \tail_i(\hooks, \pi)$ in each row $i=1,2,\ldots,n$. Furthermore, every $\tail_i(\hooks, \pi)$ is a nonempty sum of the strictly positive entries in $\hooks$, so $U_{i,j}$ must be positive and $U$ must have exactly one nonzero entry per row. Since $\target_i(\pi) \geq i$ by definition, $U$ is also upper triangular.

Finally, we must show that $\hooksvec(U) = \hooks$. By the definition of $\psi_{\hooks}$, we have
\begin{align*}
\hooksvec_i(U) &= \tail_i(\hooks, \pi) - \sum_{c < i : \target_c(\pi) = i} \tail_c(\hooks, \pi).
\end{align*}
Recall that $\tail_i(\hooks, \pi)$ is the sum $\sum_{r=m_i(\pi)}^{\bl_i(\pi)} \hooks_{\min \pi_r  }$ where $\pi_{m_i(\pi)}$ is the leftmost block of $\pi$ such that every element in the blocks $\pi_{m_i(\pi)}, \pi_{m_i(\pi)+1}, \ldots, \pi_{i-1}$ is less than $i$. 

Let $c_1, c_2, \ldots, c_k$ be the set of elements less than $i$ such that $\target_{c_j}(\pi) = i$ as they occur from left to right in $\pi$. For example, if $\pi = 8|25|6|14|379$ and $i=7$, then $m_i(\pi) = 2$, $c_1 = 6$, $c_2=4$, and $c_3=3$. For each $j$, every element between $c_j$ and $i$ in $\pi$ must be less than $i$, so $c_1, c_2, \ldots, c_k$ appear among the blocks $\pi_{m_i(\pi)}, \ldots, \pi_{\bl_i(\pi)}$ and we must have $c_1 > c_2 > \ldots > c_k$. Furthermore, each entry between $c_j$ and $c_{j+1}$ must be less than $c_{j+1}$; to see this, we note that if any such element were larger than $c_{j+1}$ and its target were not equal to $i$, then $c_{j+1}$'s target could not be $i$, which is a contradiction. It follows that $m_{c_j}(\pi) = \bl_{c_{j-1}}(\pi)$ for each $j \geq 2$ and $m_{c_1}(\pi) = m_i(\pi)$. By the definition of the tail vector, this implies
\begin{align}
\tail_{c_j}(\hooks, \pi) &= \sum_{r=m_{c_j}(\pi)}^{\bl_{c_j}(\pi)} \hooks_{\min \pi_r} = \sum_{\bl_{c_{j-1}}(\pi)}^{\bl_{c_j}(\pi)} \hooks_{\min \pi_r}
\end{align}
for $j \geq 2$ and
\begin{align}
\tail_{c_1}(\hooks, \pi) &= \sum_{r=\bl_{m_i(\pi)}(\pi)}^{\bl_{c_1}(\pi)} \hooks_{\min \pi_r}.
\end{align}

Now we note that we can rewrite $\hooksvec_i(U)$ as
\begin{align}
\hooksvec_i(U) &= \tail_i(\hooks, \pi) - \sum_{c < i : \target_c(\pi) = i} \tail_c(\hooks, \pi) \\
&= \tail_i(\hooks, \pi) - \sum_{j=1}^{k} \tail_{c_j}(\hooks, \pi) \\
&= \sum_{r=m_i(\pi)}^{\bl_i(\pi)} \hooks_{\min \pi_r} - \sum_{j=1}^{k} \sum_{r=m_{c_j}(\pi)}^{\bl_{c_j}(\pi)} \hooks_{\min \pi_r} \\
&= \sum_{r=m_i(\pi)}^{\bl_i(\pi)} \hooks_{\min \pi_r} - \sum_{r=\bl_{m_{i}(\pi)}(\pi)}^{\bl_{c_1}(\pi)} \hooks_{\min \pi_r} - \sum_{j=2}^{k} \sum_{r=\bl_{c_{j-1}}(\pi)}^{\bl_{c_{j}}(\pi)} \hooks_{\min \pi_r} .
\end{align}
Since the $c_j$'s decrease from left to right, each $\bl_{c_j}(\pi)$ must be distinct. Therefore we can rewrite the previous identity as
\begin{align} 
\label{tails-eqn-final}
&= \sum_{r=m_i(\pi)}^{\bl_i(\pi)} \hooks_{\min \pi_r} - \sum_{r=m_i(\pi)}^{\bl_{c_k}(\pi)} \hooks_{\min \pi_r} .
\end{align}

The only question that remains is the value of $\bl_{c_k}(\pi)$. If $i$ is not minimal in its block, then it must be the target of the element immediately to its left, which must share a block with $i$. In this case, this element immediately to $i$'s left must be $c_k$, so $\bl_{c_k}(\pi) = \bl_i(\pi)$ and \eqref{tails-eqn-final} must be equal to zero. If $i$ is minimal in its block, then the element immediately to its left is still $c_k$, but in this case we have $\bl_{c_k} = \bl_i(\pi)-1$, which means that \eqref{tails-eqn-final} is $\hooks_{\min \pi_{\bl_i(\pi)}} = \hooks_i$. Therefore $\hooksvec(U) = \hooks$, as desired.
\end{proof}

\begin{prop}
\label{prop:ptes-weight-neg}
Now we allow $\hooks \in \mathbb{Z}^n$ and take $\pi \in \osp{n}{\set(\hooks)}$. We extend the definition of $\tail(\hooks, \pi)$ to this new setting with no changes. Then the final formula in Proposition \ref{prop:ptes-bijection} still holds, i.e.\ 
\begin{align*}
\tes{\hooks}{q}{1} &= \sum_{\pi \in \osp{n}{\set(\hooks)}} \prod_{i=1}^{n} \qint{\tail_i(\hooks, \pi)} .
\end{align*}
\end{prop}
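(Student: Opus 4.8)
The plan is to collapse everything onto the $t=1$ specialization of the weight and then re-use the combinatorial bijection $\psi_{\hooks}$ of Proposition \ref{prop:ptes-bijection}, paying attention to the two genuinely new phenomena in the signed setting: vanishing tails and negative entries. First I would record the shape of the weight at $t=1$. Since $M=(1-q)(1-t)$ vanishes at $t=1$, only permutational matrices survive in $\tesset{\hooks}$, and for a permutational $U$ we have $\nonzero(U)=n$, so $M^{\nonzero(U)-n}=1$. Moreover each row of a permutational matrix carries a single nonzero entry, so the number of positive entries equals the number of rows all of whose entries are positive (a negative-entry row is all-negative, contributing $0$ to both counts); hence $\pos(U)-\posrows(U)=0$ and the sign is $+1$. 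Using $\qtint{k}|_{t=1}=\qint{k}$ for every integer $k$, this yields $\wt(U;q,1)=\prod_{i=1}^{n}\qint{U_{i,\col(i)}}$, where $\col(i)$ denotes the column of the unique nonzero entry in row $i$; crucially this formula is valid even when some $U_{i,\col(i)}$ are negative.

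Next I would extend $\psi_{\hooks}$ verbatim. The target vector $\target(\pi)$ depends only on $\pi$, and $\tail_i(\hooks,\pi)$ is a signed sum of entries of $\hooks$, so both make sense for $\hooks\in\mathbb{Z}^{n}$. The key point is that the computation in the proof of Proposition \ref{prop:ptes-bijection} establishing $\hooksvec(\psi_{\hooks}(\pi))=\hooks$ never uses positivity of $\hooks$: it manipulates only the indices $m_i(\pi)$, $\bl_i(\pi)$ and the combinatorics of the $c_j$'s, so it remains valid. Consequently $\psi_{\hooks}(\pi)$ is always upper triangular (as $\target_i(\pi)\ge i$), has at most one nonzero entry per row, and has hook sums $\hooks$. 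Call $\pi$ \emph{good} if all of its tails $\tail_i(\hooks,\pi)$ are nonzero. For good $\pi$ the matrix $U=\psi_{\hooks}(\pi)$ has no zero row and exactly one entry per row, so it is automatically consistently signed; thus $U\in\ptesset{\hooks}$, and by the first paragraph $\wt(U;q,1)=\prod_{i=1}^{n}\qint{\tail_i(\hooks,\pi)}$.

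Then I would dispose of the bad $\pi$ and assemble the identity. If some $\tail_i(\hooks,\pi)=0$, then $\psi_{\hooks}(\pi)$ has a zero row and is not essential, hence is not a permutational Tesler matrix; correspondingly the product $\prod_i\qint{\tail_i(\hooks,\pi)}$ contains the factor $\qint{0}=0$ and vanishes. Thus the bad $\pi$ may be freely included in the right-hand sum without changing its value. It remains to check that $\psi_{\hooks}$ restricts to a bijection from the good $\pi$ onto $\ptesset{\hooks}$. Injectivity and the inverse are exactly the reconstruction used in Proposition \ref{prop:ptes-bijection}: from a full-support matrix $U\in\ptesset{\hooks}$ one reads the target function off the positions of the nonzero entries and rebuilds $\pi$ using only $\set(\hooks)$, a procedure insensitive to the signs of the entries. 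Combining these gives
\[
\tes{\hooks}{q}{1}=\sum_{U\in\ptesset{\hooks}}\wt(U;q,1)=\sum_{\text{good }\pi}\prod_{i=1}^{n}\qint{\tail_i(\hooks,\pi)}=\sum_{\pi\in\osp{n}{\set(\hooks)}}\prod_{i=1}^{n}\qint{\tail_i(\hooks,\pi)},
\]
which is the claimed formula.

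The main obstacle is the surjectivity half of this last bijection in the signed case: one must verify that every permutational Tesler matrix with (possibly negative) hook sums $\hooks$ really arises from an ordered set partition whose block minima are exactly $\set(\hooks)$, i.e.\ that the sign-independent reconstruction of $\pi$ from $U$ always lands in $\osp{n}{\set(\hooks)}$ and produces only nonzero tails. Everything else — the weight computation at $t=1$, the vanishing of the bad terms, and the hook-sum identity $\hooksvec(\psi_{\hooks}(\pi))=\hooks$ — transfers mechanically from the positive case already handled in Proposition \ref{prop:ptes-bijection}.
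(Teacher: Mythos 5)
Your proposal is correct and follows essentially the same route as the paper: extend $\psi_{\hooks}$ unchanged, observe that the hook-sum computation never uses positivity of $\hooks$, and note that the only new failure mode --- a zero tail, which makes $\psi_{\hooks}(\pi)$ inessential --- produces a factor $\qint{0}=0$ on the right-hand side, so those $\pi$ contribute nothing. The surjectivity point you flag at the end is also left implicit in the paper's own argument (it simply asserts the restricted bijection), so your write-up is if anything slightly more explicit, in particular in the $t=1$ sign and weight analysis.
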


\begin{proof}
We claim that the only place in the proof of Proposition \ref{prop:ptes-weight-neg} that relies on $\hooks \in \mathbb{N}^n$ is the assertion that $\tail_i(\hooks, \pi)$ is always positive. When $\hooks \in \mathbb{Z}^n$, $\tail_i(\hooks, \pi)$ can be negative (which is no problem, since we allow negative entries in our Tesler matrices) or zero; this is a problem, since Tesler matrices cannot have a row containing only zeros. For example, if $\hooks = (2,0,-3,1)$ and $\pi = 3|12|4$, 
\begin{align}
\tail_4(\hooks, \pi) &= \hooks_3 + \hooks_1 + \hooks_4 = -3 + 2 + 1 = 0
\end{align}
and the bottom row of $\phi_{\hooks}(\pi)$ contains only zeros. Therefore, for $\hooks \in \mathbb{Z}^n$, it is possible that $\psi_{\hooks}(\pi)$ may produce a matrix that is not in $\ptesset{\hooks}$, so $\psi_{\hooks}$ cannot be a bijection between $\osp{n}{\set(\hooks)}$ and $\ptesset{\hooks}$.

However, we note that, if we ever have $\tail_{i}(\hooks, \pi) = 0$, then 
\begin{align*}
\prod_{i=1}^{n} \qint{\tail_{i}(\hooks, \pi)} = 0 .
\end{align*}
In other words, the elements $\pi \in \osp{n}{\set(\hooks)}$ on which $\psi_{\hooks}$ fails to be a bijection do not contribute to the right-hand side of the identity in the proposition. $\psi_{\hooks}$ is, in fact, a bijection between the elements $\pi \in \osp{n}{\set(\hooks)}$ such that $\tail_i(\hooks, \pi) \neq 0$ for every $i$ and $\ptesset{\hooks}$. Furthermore, $\psi_{\hooks}$ satisfies
\begin{align*}
\wt(\psi_{\hooks}(\pi); q, 1) = \prod_{i=1}^{n} \qint{\tail_i(\hooks, \pi)} .
\end{align*}
This completes the proof.
\end{proof}

\subsection{Connections to parking functions}
\label{ssec:pf}

In this subsection, we will see that the product of tails associated to an ordered set partition in Subsection \ref{ssec:ptes} has a connection to certain modifications of parking functions. This connection is especially interesting because of the conjectured relationships between Tesler functions and parking functions, such as the Shuffle Conjecture \cite{shuffle} and the new Delta Conjecture \cite{delta-conjecture}. We focus on the case $\hooks \in \{0,1\}^n$ in this subsection, which will extend the results of \cite{tesler-lots} for $\hooks = 1^n$. 

A \emph{parking function} of order $n$ is a function $f : \{1,2,\ldots,n\} \to \{1,2,\ldots,n\}$ such that the $i$th entry in the increasing rearrangement of $f(1), f(2), \ldots, f(n)$ is at most $i$ for every $i=1$ to $n$. A parking function is often written by simply listing $f(1) f(2) \ldots f(n)$ from left to right. For example, $311$ is a parking function but $313$ is not. 
We will often write $f$ as a sequence in this manner, and we use $f(i)$ and $f_i$ interchangeably. We will also write $\pf_n$ to mean the set of all parking functions of order $n$. Although parking functions were first defined in the setting of computer science \cite{parking}, they have become important objects in algebraic combinatorics due to their connection to areas such as diagonal harmonics \cite{gh-model, shuffle, haglund-book}.

Parking functions inherited their name because they can be interpreted in the setting of a parking lot. Given cars $1,2,\ldots,n$ and a parking lot with spaces numbered $1,2,\ldots,n$, we imagine that the cars enter the lot one at a time in increasing order. Each car $i$ has a preferred spot $f_i$. It drives to that spot; if the spot is still unoccupied, it parks there. Otherwise, it parks in the smallest available spot $j$ such that $j$ is unoccupied and $j > f_i$. The function $f$ is a parking function by the above definition if and only if all cars park successfully, i.e.\ every car always finds some spot $j$ to park in under the above procedure.

Given a parking function $f$, let $\car_i(f)$ be the number of the car that parks in spot $i$ and $\spot_i(f)$ be the spot that ends up containing car $i$. Note that $\car(f) = (\car_1(f), \car_2(f), \ldots, \car_n(f))$ and $\spot(f) = (\spot_1(f), \spot_2(f), \ldots, \spot_n(f))$ are inverses as permutations.
For example, the parking function $f=5121142$ has $\car(f) = 2345167$ and $\spot(f) = 5123467$. 

We will consider a new property of the cars of a parking function. A car $i$ is called \emph{considerate} if $\spot_i(f) \neq f_j$ for all $j$, i.e.\ no car hoped to park in the spot which car $i$ eventually occupies. Note that this implies that $f_i \neq \spot_i(f)$, i.e.\ car $i$ does not end up in its desired spot. A car $i$ which satisfies $f_i \neq \spot_i(f)$ is sometimes called \emph{unlucky}, so every considerate car must be unlucky. Although unlucky cars have appeared in the literature on parking functions, we are not aware of the use of any condition equivalent to being considerate.

We denote the set of considerate cars in $f$ by $\considerate(f)$. Given a set $S$, we will consider the ``decorated'' parking functions
\begin{align}
\consideratepf_{n,S} &= \{f: f \in \pf_n, S \subseteq \considerate(f)\}. 
\end{align}
For example, $\consideratepf_{3, \{2\}} = \{111, 113, 221\}$. $213$ is not in $\consideratepf_{3, \{2\}}$ because 2 is ``lucky'' in that case, and $112$ is not in $\consideratepf_{3, \{2\}}$ because car 3 desires to park in spot 2, which is where car 2 parks. Note that a given parking function can be in $\consideratepf_{n,S}$ for several different sets $S$.

Given an element $f \in \consideratepf_{n,S}$, we define an ordered set partition by taking the underlying permutation $\car(f)$ and adding bars before each car not in $S$ (except for the leftmost entry in $\car(f)$, which cannot be considerate by definition). We denote this ordered set partition $\car(f, S)$. For example, 
\begin{align}
\car(5121142, \{4,7\}) = 2|34|5|1|67
\end{align}
It is not immediately clear that the result is even an ordered set partition; namely, we need to ensure that bars are only placed at ascents in the permutation. Since every car $i$ in $S$ is unlucky, we must have $f_i < \spot_i(f)$. It follows that each spot $f_i, f_i + 1, \ldots, \spot_i(f)-1$ must contain a car $j < i$; otherwise, car $i$ would have parked in that spot. In particular, the car that parks immediately to the left of $i$ must have some label less than $i$.

We also define a statistic on elements of $\consideratepf_{n,S}$:
\begin{align}
\area(f, S) &= \sum_{i=1}^{n} \spot_i(f) - f_i - | \{f_i, f_i + 1, \ldots, \spot_i(f)\} \cap \{\spot_j(f) : j \in S \}| .
\end{align}
In words, this is the total number of spots passed by cars after passing their desired spot, minus those spots that they pass that contain a ``considerate'' car in $S$ (including possibly their own destinations). One can verify that, when $S = \emptyset$, this is the standard area statistic on parking functions \cite{haglund-book}. In general, this new area statistic is equal to the area of the parking function $f$ minus the areas of the columns $\spot_j(f)$ for all $j \in S$ when $f$ is drawn as a labeled Dyck path minus $|S|$. 

For example, consider $f = 5121142 \in \consideratepf_{7, \{4,7\}}$, so $S = \{4,7\}$. We have $\spot(f) = 5123467$ and $\{\spot_j(f) : j \in S\} = \{3,7\}$. We can compute
\begin{align}
\area(f, S) &= (5-5-0) + (1-1-0) + (2-2-0) \\
&+ (3-1-1) + (4-1-1) + (6-4-0) + (7-2-2) \\
&= 0 + 0 + 0 + 1 + 2 + 2 + 3 \\
&= 8.
\end{align}
The labeled Dyck path for $f=5121142$ is drawn in Figure \ref{fig:labeled-dyck}. To compute the area of $f$, we simply count the number of full squares between the Dyck path and the line $y=x$. Recall that $\{\spot_j(f) : j \in S\} = \{3,7\}$. These columns contribute 2 and 0 squares to the area of $f$, respectively. Then
\begin{align}
\area(f,S) &= \area(f) - 2 - 0 - |S| = 12 - 2 - 0 - 2 = 8.
\end{align}
This labeled Dyck path interpretation is important because it means that the forthcoming Proposition \ref{prop:tesler-pf-q} proves the $q=1$ case of the Rise Version of the Delta Conjecture \cite{delta-conjecture} where we take scalar products with $p_{1^n}$. For more information on parking functions as labeled Dyck paths, see \cite{haglund-book, delta-conjecture}.

\begin{figure}
\begin{center}
\begin{tikzpicture}
\draw[very thin] (0,0) -- (3.5,3.5);
\draw[step=0.5cm, gray, very thin] (0, 0) grid (3.5, 3.5);
\draw[blue, very thick] (0,0) -- (0,1.5) -- (0.5,1.5) -- (0.5,2.5) -- (1.5,2.5) -- (1.5,3) -- (2,3) -- (2,3.5) -- (3.5,3.5);
\node at (0.25,0.25) {2};
\node at (0.25,0.75) {4};
\node at (0.25,1.25) {5};
\node at (0.75,1.75) {3};
\node at (0.75,2.25) {7};
\node at (1.75,2.75) {6};
\node at (2.25,3.25) {1};
\end{tikzpicture}
\end{center}
\caption{The labeled Dyck path associated with the parking function $5121142$. The area of this parking function is 12.}
\label{fig:labeled-dyck}
\end{figure}
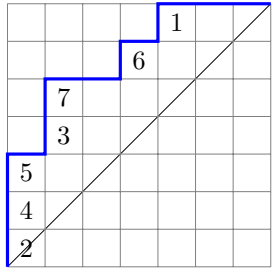

\begin{prop}
\label{prop:tesler-pf-q}
Given $\hooks \in \{0,1\}^n$, let $S = \{1,2,\ldots,n\} \setminus \set(\hooks)$. For any ordered set partition $\pi \in \osp{n}{\set(\hooks)}$,
\begin{align*}
\prod_{i=1}^{n} \qint{\tail_i(\hooks, \pi)} &= \sum_{\substack{f \in \consideratepf_{n, S} \\ \car(f, S) = \pi}} q^{\area(f, S)} .
\end{align*}
\end{prop}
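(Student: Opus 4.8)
The plan is to fix $\pi$ and show that both sides equal the same product of $q$-integers indexed by $i=1,\ldots,n$. First I would simplify the left-hand side. Since $\pi \in \osp{n}{\set(\hooks)}$ and $\hooks \in \{0,1\}^n$, every block-minimum of $\pi$ lies in $\set(\hooks)$, so $\hooks_{\min \pi_r} = 1$ for every block $\pi_r$. Hence $\tail_i(\hooks,\pi) = \bl_i(\pi) - m_i(\pi) + 1$ simply counts the blocks $\pi_{m_i(\pi)}, \ldots, \pi_{\bl_i(\pi)}$, i.e.\ the number of blocks from $i$'s block leftward up to (but not past) the nearest block on its left containing an entry exceeding $i$. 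Thus the left-hand side is $\prod_{i=1}^n \qint{\tail_i(\hooks,\pi)}$, with each $\tail_i(\hooks,\pi) \ge 1$.

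Next I would reduce the right-hand side to a sum over parking functions with a fixed car-to-spot assignment. The condition $\car(f,S) = \pi$ forces $\car(f)$ to equal the permutation $\sigma$ obtained by erasing the bars of $\pi$: the bars of $\car(f,S)$ sit before the non-$S$ cars, which are precisely the block-minima of $\pi$, so once the underlying permutations agree the two ordered set partitions coincide. Because $\car(f)$ and $\spot(f)$ are inverse permutations, fixing $\car(f)=\sigma$ fixes the spot of each car, $\spot_i(f) = s_i := \sigma^{-1}(i)$, and the occupied spots $\{\spot_j(f): j \in S\}$ become exactly the spots $t$ whose car is not a block-minimum of $\pi$, that is, the non-initial spots of the blocks of $\pi$; I will call these the \emph{$S$-spots}.

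I would then translate the two remaining constraints into independent per-car conditions. The requirement $S \subseteq \considerate(f)$ says that no $S$-spot is preferred by any car, and since $\{s_i : i \in S\}$ is exactly the set of $S$-spots this is equivalent to requiring that no preference $f_j$ is an $S$-spot. With the spots fixed, car $i$ may prefer any spot $t$ with $L(i) < t \le s_i$, where $L(i)$ is the largest spot below $s_i$ whose car exceeds $i$ (and $L(i)=0$ if there is none), since these are exactly the spots below $s_i$ already filled by earlier-arriving cars; the considerate condition further restricts $f_i$ to the non-$S$-spots in this range. As $\area(f,S)$ is a sum of per-car terms and the preferences now vary independently, the generating function factors over $i$. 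Listing the allowed preferences for car $i$ as $u_1 < \cdots < u_{N_i}$, the identity $s_i - f_i = (\#\text{spots in } [f_i,s_i]) - 1$ rewrites the $i$th summand of $\area(f,S)$ as $(\#\{\text{non-}S\text{-spots in }[f_i,s_i]\}) - 1$; for $f_i = u_k$ this equals $N_i - k$, so car $i$ contributes $\sum_{k=1}^{N_i} q^{N_i-k} = \qint{N_i}$, where $N_i$ is the number of block-initial spots in $[L(i)+1, s_i]$.

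Finally I would match $N_i$ with $\tail_i(\hooks,\pi)$. Because the blocks of $\pi$ appear as increasing runs of cars in spot order (as established in the discussion defining $\car(f,S)$, every $S$-car is immediately preceded by a smaller car), a block contains an entry exceeding $i$ if and only if its rightmost entry does; hence the nearest block to the left of $\bl_i(\pi)$ with an entry exceeding $i$ has such an entry at its rightmost spot, which is precisely $L(i)$, so $L(i)+1$ is the initial spot of block $m_i(\pi)$. The block-initial spots in $[L(i)+1,s_i]$ are therefore exactly the initial spots of blocks $m_i(\pi), \ldots, \bl_i(\pi)$, giving $N_i = \bl_i(\pi)-m_i(\pi)+1 = \tail_i(\hooks,\pi)$ and hence $\prod_i \qint{N_i} = \prod_i \qint{\tail_i(\hooks,\pi)}$, as required. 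I expect this last matching to be the main obstacle: it is where the purely spot-local quantity $L(i)$ must be identified with the block-local data in the definition of $\tail$, and it rests on the increasing-run structure of the blocks so that a block's maximum sits at its right end. The area computation above is the other delicate point, since one must verify that subtracting the $S$-spots is exactly what converts the naive area count $\qint{s_i - L(i)}$ into $\qint{N_i}$.
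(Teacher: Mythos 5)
Your argument is correct and follows the same route as the paper's proof: fix $\pi$, observe that $\car(f,S)=\pi$ pins down $\spot$, factor the sum into independent per-car choices of preferred spot, and show that car $i$'s allowed preferences contribute $\qint{\tail_i(\hooks,\pi)}$ to the area generating function. The paper argues this largely through a worked example, whereas you supply the missing details (the identification of the allowed range $(L(i), s_i]$, the verification that independent choices reassemble into valid parking functions, and the matching of $L(i)+1$ with the initial spot of block $m_i(\pi)$ via the increasing-run structure of the blocks), but the underlying idea is identical.
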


\begin{proof}
For each car $i = 1$ to $n$, we claim that $\tail_i(\hooks, \pi)$ gives the number of possible spots in which car $i$ could have wanted to park, i.e.\ the number of possibilities for $f_i$ for an element $f \in \consideratepf_{n,S}$. For example, if $\hooks = (1,1,1,0,1,1,0)$ and $\pi = 2|34|5|1|67$, we compute
\begin{align*}
\tail(\hooks, \pi) = (1, 1, 2, 2, 3, 5, 5)
\end{align*}
We know that the cars end up in order $2345167$ and that cars $4$ and $7$ are unlucky. Car 1 always gets its preferred spot, so we must have $f_1=5$. Since car 2 is in spot 1, it must have preferred spot 1, so $f_2=1$. Car 3 could have preferred its eventual spot ($f_3=2$), or it could have wanted to park in spot 1, in which case it would have been pushed to spot 2 because car 2 was already parked in spot 1. Therefore car 3 could have wanted to park in 2 different spots. We know car 4 ends up in spot 3, but we also know that it is considerate. Hence, car 4 could not have wanted to park in its eventual spot ($f_4 \neq 3)$. It is possible that $f_4 = 1$ or 2. Car 5 could have wanted to park in spots 1, 2, or 4; it could not have wanted to park in spot 3, because that would contradict the fact that car 4 is considerate. By similar logic, car 6 and car 7 could have wanted to park in 5 different spaces: 1,2,4,5, and 6.

It follows that
\begin{align}
\prod_{i=1}^{n} \tail_i(\hooks, \pi) = |\consideratepf_{n,S}|.
\end{align}
In fact, for any car $i$, we claim
\begin{align}
\qint{\tail_i(\hooks, \pi)} = \sum q^{\spot_i(f) - f_i - | \{f_i, f_i + 1, \ldots, \spot_i(f)\} \cap \{\spot_j(f) : j \in S \}|}
\end{align}
where the right sum is over all potential $f_i$'s that result in car $i$ parking in $\spot_i(f)$. To see this, consider car 5 in the above example again. Car 5 ends up in spot 4, but it could have wanted to park in spots 1, 2, or 4. These possibilities would contribute 2,1, or 0 to $\area(f, S)$, respectively. Summing these contributions as powers of $q$, we obtain $\qint{\tail_5(\hooks, \pi)}$.

\end{proof}

\subsection{The $q=t=1$ case}
\label{ssec:q=t=1}

If we also set $q=1$, we can provide a simple product formula for the Tesler function. This same product formula was proved to hold in the case where $\hooks_i > 0$ for each $i$ in \cite{tesler-lots}.

\begin{prop}
For any $\hooks \in \mathbb{Z}^n$, 
\begin{align*}
\tes{\hooks}{1}{1} &= \sum_{\pi \in \osp{n}{\set(\hooks)}} \prod_{i=1}^{n} \tail_i(\hooks, \pi) \\
&=  \hooks_1(\hooks_1 + n\hooks_2)(\hooks_1 + \hooks_2 + (n-1)\hooks_3) \ldots (\hooks_1 + \hooks_2 + \ldots + \hooks_{n-1} + 2\hooks_n) .
\end{align*}
\end{prop}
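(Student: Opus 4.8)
The plan is to dispose of the ordered-set-partition equality immediately and then prove the product formula directly through permutational Tesler matrices, reorganizing the sum so that a clean recursion appears.

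The first equality, $\tes{\hooks}{1}{1}=\sum_{\pi\in\osp{n}{\set(\hooks)}}\prod_{i=1}^n\tail_i(\hooks,\pi)$, is simply Proposition~\ref{prop:ptes-weight-neg} specialized at $q=1$, since $\qint{k}\big|_{q=1}=k$ for every integer $k$. So the real content is the product formula, which I would prove without ever leaving the permutational picture. Starting from $\tes{\hooks}{q}{1}=\sum_{U\in\ptesset{\hooks}}\wt(U;q,1)$, I note that at $q=t=1$ the weight of a permutational matrix collapses: $\nonzero(U)=n$ kills the power of $M$, the sign equals $+1$ because each nonzero entry occupies its own row and hence $\pos(U)=\posrows(U)$, and each factor $\qint{U_{i,j}}$ becomes $U_{i,j}$. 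Thus $\tes{\hooks}{1}{1}=\sum_{U\in\ptesset{\hooks}}\prod_i U_{i,\tau(i)}$, where $\tau(i)\ge i$ is the column of the unique nonzero entry in row $i$. The hook-sum conditions force the row values to satisfy $u_j=\hooks_j+\sum_{i<j:\,\tau(i)=j}u_i$, and I would sum over all maps $\tau\colon[n]\to[n]$ with $\tau(i)\ge i$ (those producing some $u_j=0$ drop out, as they kill the product). Reading $\tau$ as a parent map realizes each $\tau$ as a forest $F$ on $[n]$ whose edges point from smaller to larger labels; unwinding the recursion gives $u_j=\sum_{i\preceq_F j}\hooks_i$, the sum of $\hooks$ over the subtree rooted at $j$. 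The goal thus becomes the polynomial identity $\sum_F\prod_{j=1}^n\bigl(\sum_{i\preceq_F j}\hooks_i\bigr)=P_n(\hooks)$.

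The main obstacle is that $P_n(\hooks)$ admits no honest recursion in $n$ on its own: reducing $n$ to $n-1$ shifts every coefficient $(n+2-j)$, so deleting a single variable never closes the induction. The device that repairs this is to introduce a root-marked sum $T^{(x)}_n(\hooks)=\sum_F x^{\operatorname{rt}(F)}\prod_{j}\bigl(\sum_{i\preceq_F j}\hooks_i\bigr)$, where $\operatorname{rt}(F)$ is the number of roots (trees) of $F$; the desired quantity is $T^{(1)}_n$. I would produce a recursion by deleting the maximal label $n$, which is always a root. Forests on $[n]$ biject with pairs $(F',A)$, where $F'$ is a forest on $[n-1]$ and $A$ is the subset of its roots chosen to become children of $n$; under this correspondence $u_n=\hooks_n+\sum_{r\in A}u_r(F')$, the root count changes by $1-|A|$, and crucially $\sum_{r\,\text{root of }F'}u_r(F')=\hooks_1+\cdots+\hooks_{n-1}$ because the root subtrees partition $[n-1]$. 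Performing the binomial sum over $A\subseteq\operatorname{Roots}(F')$ — the only routine computation — collapses everything to
\[
T^{(x)}_n(\hooks)=\Bigl((x+1)\hooks_n+\textstyle\sum_{i<n}\hooks_i\Bigr)\,\frac{x}{x+1}\,T^{(x+1)}_{n-1}(\hooks_1,\ldots,\hooks_{n-1}).
\]

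Finally I would iterate this recursion from $T^{(1)}_n$: the parameter climbs $1\to2\to\cdots\to n$ exactly as the size drops $n\to1$, so the bracket generated at size $p$ (where $x=n-p+1$) is $\sum_{i<p}\hooks_i+(n+2-p)\hooks_p$, which is precisely the $p$-th factor of $P_n$. The accumulated scalars telescope, $\tfrac12\cdot\tfrac23\cdots\tfrac{n-1}{n}=\tfrac1n$, while the base case $T^{(x)}_1(\hooks_1)=x\hooks_1$ contributes $T^{(n)}_1(\hooks_1)=n\hooks_1$, whose factor $n$ cancels the $\tfrac1n$ and leaves the leading $\hooks_1$. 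Assembling the pieces gives $T^{(1)}_n(\hooks)=\hooks_1\prod_{p=2}^n\bigl(\sum_{i<p}\hooks_i+(n+2-p)\hooks_p\bigr)=P_n(\hooks)$, which is the claim. To be safe I would also check the two degenerate conventions: the empty product when $n=1$, and the case $\hooks_1=0$, where $\osp{n}{\set(\hooks)}=\emptyset$ and both sides vanish.
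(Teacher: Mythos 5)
Your proof is correct, and it takes a genuinely different route from the paper's. The paper likewise obtains the first equality by setting $q=1$ in Proposition \ref{prop:ptes-weight-neg}, but for the product formula it converts $\prod_i \tail_i(\hooks,\pi)$ into a weighted sum $\sum_f \wt_{\hooks}(f)$ over ``considerate'' parking functions (reusing the proof of Proposition \ref{prop:tesler-pf-q}) and then evaluates that sum by the classical cyclic/Pollak device: park the cars in a circular lot with an extra $(n+1)$st space, accumulate the factors $(n+1)\hooks_1,\ \hooks_1+n\hooks_2,\ \dots$ car by car, and divide by $n+1$. You instead never leave the permutational Tesler matrices: reading the column map $\tau(i)\ge i$ as an increasing forest on $\{1,\dots,n\}$ turns the row values into subtree sums of $\hooks$, and the resulting forest identity is proved via the root-marked sum $T^{(x)}_n$ and the deletion-of-the-root-$n$ recursion. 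I verified the details you left implicit: the binomial sum over $A\subseteq\operatorname{Roots}(F')$ does give $x(x+1)^{m-1}\bigl((x+1)\hooks_n+\sigma\bigr)$ with $\sigma=\hooks_1+\cdots+\hooks_{n-1}$ (since the root subtrees partition $\{1,\dots,n-1\}$), the scalars $\tfrac{1}{2}\cdot\tfrac{2}{3}\cdots\tfrac{n-1}{n}$ telescope to $\tfrac{1}{n}$ against the base case $T^{(n)}_1=n\hooks_1$, the sign in $\wt(U;1,1)$ is indeed $+1$ for permutational matrices since $\pos(U)=\posrows(U)$, and the maps $\tau$ producing some $u_j=0$ (which are not legal Tesler matrices) contribute $0$ to the product, so summing over all $\tau$ is harmless. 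Your argument is self-contained and arguably more rigorous than the paper's, which only sketches the cyclic argument and defers to Proposition 6.2 of the cited reference; what the paper's route buys instead is the explicit bridge to parking functions and the $q$-refinement of Proposition \ref{prop:tesler-pf-q}, which your forest identity does not by itself recover.
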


\begin{proof}
We will roughly follow the proof of Proposition 6.2 in \cite{tesler-lots}. For any parking function $f$, we set
\begin{align}
\wt_{\hooks}(f) &= \prod_{i=1}^{n} \hooks_{\car_{f_i} (f)} .
\end{align}
In words, for each car $i$ we multiply by the entry in $\hooks$ whose subscript is equal to the number of the car that ends up in $i$'s desired spot, $f_i$. For example, if $\hooks = (2,-1,0,3)$ and $f = 2121$, 
\begin{align}
\wt_{\hooks}(f) &= \hooks_1 \hooks_2 \hooks_1 \hooks_2 = (2)(-1)(2)(-1) = 4.
\end{align}

First, we want to show that, for any $\pi \in \osp{n}{\set(\hooks)}$,
\begin{align}
\prod_{i=1}^{n} \tail_i(\hooks, \pi) &= \sum_{f \in \consideratepf_{n, S}} \wt_{\hooks}(f)
\end{align}
where $S$ is the complement of $\set(\hooks)$. This essentially follows from the proof of Proposition \ref{prop:tesler-pf-q}. For any $f \in \consideratepf_{n,S}$, the spaces where $i$ could have wanted to park, i.e.\ the potential $f_i$, are exactly the spaces which contribute to the sum in the definition of $\tail_i(\hooks, \pi)$.  

Finally, we claim
\begin{align}
\sum_{f \in \consideratepf_{n, S}} \wt_{\hooks}(f) &= \hooks_1(\hooks_1 + n\hooks_2)(\hooks_1 + \hooks_2 + (n-1)\hooks_3)\\
&\ldots (\hooks_1 + \hooks_2 + \ldots + \hooks_{n-1} + 2\hooks_n) .
\end{align}
This follows from Proposition 6.2 in \cite{tesler-lots}, but we sketch their argument here for the sake of completeness. One well-known way to produce all parking functions in $\pf_n$ is to start with a lot with an extra space, which is labeled $n+1$. Then each car has $n+1$ choices for its preferred space. After the cars are all parked, the numbers of the spaces are ``rotated'' until the unoccupied space is number $n+1$. There are $n+1$ options for rotating and only one of these options has $n+1$ unoccupied, so this corresponds to division by $n+1$. This argument shows that
\begin{align}
|\pf_n| &= (n+1)^n / (n+1) = (n+1)^{n-1}.
\end{align}
Now we run through this procedure again, account for the weight $\wt_{\hooks}(f)$. Car 1 has $n+1$ choices of its preferred spot, yielding a term of $(n+1)\hooks_1$. Car 2 can either try to park where car 1 has just parked, producing a weight of $\hooks_1$, or it can pick one of the $n$ other spots, producing a weight of $n \hooks_2$. We continue this process until car $n$ has 2 ways to prefer a spot which is unoccupied, yielding $2 \hooks_n$, or it can try to park where one of the other $n-1$ cars has already parked. Finally, we rotate the numbers of the spaces until space $n+1$ is empty; this yields a division by $n+1$. One can see that this product is equal to the product in the statement of this proposition.
\end{proof}

\section{Future work}
\label{sec:future}

In this section, we conclude by discussing some ways in which our results could be improved or extended in the future.

In our results so far, we have relied heavily on the fact that we are taking Hilbert series of the various symmetric functions at hand. It is reasonable to ask how Tesler matrices can be used to give formulas for the symmetric functions themselves. For example, in \cite{tesler-frobenius}, Garsia and Haglund use Tesler matrices to give a formula for the symmetric function $\nabla e_n$. A similar (but not equivalent) formula for (rational extensions of) $\nabla e_n$ is given in \cite{gorsky-negut}.

In a similar vein, it would be interesting to obtain symmetric functions whose Hilbert series are equal to $F^{\hooks}_{\mu}$. Such a result would allow us to replace virtual Hilbert series with the actual Hilbert series of these symmetric functions. 

It seems possible that the methods used in Subsection \ref{ssec:pos} could be applied when $e_1$ is replaced by a slightly more complicated function ($e_2$ or $m_2$, for example). Similarly, we may be able to extend the results in Section \ref{sec:t=0} to $\hooks$ with entries not equal to 0 or 1. The computations will be more difficult in these cases, but they may still be tractable. 

As for special cases of Tesler functions, one would hope to be able to find a formula for the function $\tes{\hooks}{q}{0}$ and to derive a parking function interpretation for $\tes{\hooks}{q}{1}$ for any $\hooks \in \mathbb{Z}^n$. It would also be helpful to come up with a parking function interpretation for the full function $\tes{\hooks}{q}{t}$, even just for $\hooks \in \{0,1\}^n$. 

\bibliography{../statistics/statistics}
\bibliographystyle{alpha}

\end{document}